\newtheorem{thm}{Theorem}[section]
\newtheorem{pro}[thm]{Proposition}
\theoremstyle{definition}
\def\-{\mbox{--}}
\def\pf{\noindent {\it Proof.} }
\begin{document}
\title{Distance proper connection of graphs and their complements}
\author{
\small Xueliang Li$^1$, Colton Magnant$^2$, Meiqin Wei$^1$, Xiaoyu Zhu$^1$\\
\small $^1$Center for Combinatorics and LPMC\\
\small Nankai University, Tianjin 300071, China\\
\small Email: lxl@nankai.edu.cn; weimeiqin8912@163.com; zhuxy@mail.nankai.edu.cn\\
\small $^2$Department of Mathematical Sciences\\
\small Georgia Southern University, Statesboro, GA 30460-8093, USA\\
\small Email: cmagnant@georgiasouthern.edu}
\date{}
\maketitle

\begin{abstract}
Let $G$ be an edge-colored connected graph. A path $P$ in $G$ is called a distance $\ell$-proper path if no two edges of the same color can appear with less than $\ell$ edges in between on $P$. The graph $G$ is called $(k,\ell)$-proper connected if there is an edge-coloring such that every pair of distinct vertices of $G$ are connected by $k$ pairwise internally vertex-disjoint distance $\ell$-proper paths in $G$. The minimum number of colors needed to make $G$ $(k,\ell)$-proper connected is called the $(k,\ell)$-proper connection number of $G$ and denoted by $pc_{k,\ell}(G)$. In this paper we first focus on the $(1,2)$-proper connection number of $G$ depending on some constraints of $\overline G$. Then, we characterize the graphs of order $n$ with $(1,2)$-proper connection number $n-1$ or $n-2$. Using this result, we investigate the Nordhaus-Gaddum-Type problem of $(1,2)$-proper connection number and prove that $pc_{1,2}(G)+pc_{1,2}(\overline{G})\leq n+2$ for connected graphs $G$ and $\overline{G}$. The equality holds if and only if $G$ or $\overline{G}$ is isomorphic to a double star.
{\flushleft\bf Keywords}: distance $\ell$-proper path; $(k,\ell)$-proper connected; $(k,\ell)$-proper connection number
{\flushleft\bf AMS subject classification 2010}: 05C15, 05C40
\end{abstract}

\section{Introduction}
All graphs in this paper are finite, undirected, simple and connected. We follow the notation and terminology in the book \cite{BM}.

When considering the transmission of information between agencies of the government, an immediate question is put forward as follows: What is the minimum number of passwords or firewalls needed that allows one or more secure paths between every two agencies so that the passwords along each path are distinct? This question can be represented by a graph and studied by means of what is called rainbow colorings introduced by Chartrand et al.~in \cite{CJMZ}. An \emph{edge-coloring} of a graph is a mapping from its edge set to the set of natural numbers (colors). A path in an edge-colored graph with no two edges sharing the same color is called a \emph{rainbow path}. A graph $G$ with an edge-coloring $c$ is said to be \emph{rainbow connected} if every pair of distinct vertices of $G$ is connected by at least one rainbow path in $G$. The coloring $c$ is called a \emph{rainbow coloring} of the graph $G$. For a connected graph $G$, the minimum number of colors needed to make $G$ rainbow connected is defined as the \emph{rainbow connection number} of $G$ and denoted by $rc(G)$. Many researchers have been studied problems on the rainbow connection and got plenty of nice results, see \cite{KY,LS2,LS} for examples. For more details we refer to the survey paper \cite{LSS} and the book \cite{LS}.

A relaxation of this question can be the following: What is the minimum number of passwords or firewalls that allows one or more secure paths between every two agencies such that as we progress from one agency to another along such a path, we are required to change passwords at each step? Inspired by this, Borozan et al.~in \cite{BFG} and Andrews et al.~in \cite{ALLZ} introduced the concept of proper-path coloring of graphs. Let $G$ be an edge-colored graph. A path $P$ in $G$ is called a \emph{proper path} if no two adjacent edges of $P$ are colored with the same color. An edge-colored graph $G$ is \emph{$k$-proper connected} if every pair of distinct vertices $u,v$ of $G$ are connected by $k$ pairwise internally vertex-disjoint proper $(u,v)$-paths in $G$. For a connected graph $G$, the minimum number of colors needed to make $G$ $k$-proper connected is called the \emph{$k$-proper connection number} of $G$ and denoted by $pc_k(G)$. Particularly for $k=1$, we write $pc_1(G)$, the proper connection number of $G$, as $pc(G)$ for simplicity. Recently, many results have been obtained about the proper connection number. For details we refer to a dynamic survey paper \cite{LM}.

Extending the notion of a proper path, the $(k,\ell)$-proper-path coloring was defined in \cite{SUBMITTED} as a generalization of rainbow coloring and proper-path coloring. A path $P$ in $G$ is called a \emph{distance $\ell$-proper path} if no two edges of the same color can appear with fewer than $\ell$ edges in between on $P$. The graph $G$ is called \emph{$(k,\ell)$-proper connected} if there is an edge-coloring $c$ such that every pair of distinct vertices of $G$ are connected by $k$ pairwise internally vertex-disjoint distance $\ell$-proper paths in $G$. This coloring is called a \emph{$(k,\ell)$-proper-path coloring} of $G$. In addition, if $t$ colors are used, then $c$ is referred to as a \emph{$(k,\ell)$-proper-path $t$-coloring} of $G$. For a connected graph $G$, the minimum number of colors needed to make $G$ $(k,\ell)$-proper connected is called the \emph{$(k,\ell)$-proper connection number} of $G$ and denoted by $pc_{k,\ell}(G)$. Particularly, for $k=1$ and $\ell=2$, there is an edge-coloring using $pc_{1,2}$ colors such that there exists a $2$-proper path between each pair of vertices of the graph $G$. Furthermore, if we ensure that every path in $G$ is a $2$-proper path, then the edge-coloring becomes a strong edge-coloring. In addition, the strong chromatic index $\chi'_s(G)$, which was introduced by Fouquet and Jolivet \cite{FJ}, is the minimum number of colors needed in a  strong edge-coloring of $G$. Immediately we get that $pc_{1,2}(G)\leq \chi'_s(G)$. And this inspires us to pay our attention to the $(1,2)$-proper connection number of the connected graph $G$, i.e., $pc_{1,2}(G)$.

In this paper, we consider the $(k, \ell)$-proper connection number of graphs and their complements. This paper is organized as follows. In Section~\ref{Sect:Prelim}, we list some useful results about the $(k,\ell)$-proper connection number of a graph. In Section~\ref{Sect:complements}, we focus on $pc_{1,2}(G)$ depending on some constraints of $\overline G$. In Section~\ref{Sect:NG}, we first characterize the graphs of order $n$ with $(1,2)$-proper connection number $n-1$ or $n-2$. Using this result, we give the Nordhaus-Guddum-Type result for the $(1,2)$-proper connection number, i.e., $pc_{1,2}(G)+pc_{1,2}(\overline{G})\leq n+2$ for connected graphs $G$ and $\overline{G}$, and the equality holds if and only if $G$ or $\overline{G}$ is isomorphic to a double star.

\section{Preliminaries}\label{Sect:Prelim}

In this section, we introduce some definitions and present several results which will be used later. Let $G$ be a connected graph. We denote by $n$ the number of its vertices and $m$ the number of its edges. The \emph{distance between two vertices} $u$ and $v$ in $G$, denoted by $d(u,v)$, is the length of a shortest path between them in $G$. The \emph{eccentricity} of a vertex $v$ is $ecc(v):=max_{x\in V(G)}d(v, x)$. The \emph{radius} of $G$ is $rad(G):=min_{x\in V(G)}ecc(x)$. We also write $\sigma'_2(G)$ as the largest sum of degrees of vertices $x$ and $y$, where $x$ and $y$ are taken over all couples of adjacent vertices in $G$. Additionally, we set $[n]=\{1,2,\cdots,n\}$ for any integer $n\geq1$.

The following are some results that we will use in our proofs. The first is a simple observation that the addition of edges cannot increase the proper connection number.

\begin{pro}[\cite{SUBMITTED}]\label{spanning}
If $G$ is a nontrivial connected graph and $H$ is a connected spanning subgraph of $G$, $\ell\geq 1$ is an integer. Then $pc_{1,\ell}(G)\leq pc_{1,\ell}(H)$. Particularly, $pc_{1,\ell}(G)\leq pc_{1,\ell}(T)$ for every spanning tree $T$ of $G$.
\end{pro}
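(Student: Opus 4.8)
The plan is to take an optimal coloring on the subgraph and simply transplant it to $G$. Let $t=pc_{1,\ell}(H)$ and fix a $(1,\ell)$-proper-path $t$-coloring $c_H$ of $H$. I would define an edge-coloring $c$ of $G$ by setting $c(e)=c_H(e)$ for every $e\in E(H)$ and $c(e)=1$ (or any single color already appearing in $c_H$) for every $e\in E(G)\setminus E(H)$. This $c$ uses exactly $t$ colors, so no new colors are introduced.

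Next I would verify that $c$ is a $(1,\ell)$-proper-path coloring of $G$. Take any two distinct vertices $u,v$ of $G$. Since $H$ is a spanning subgraph, $u,v\in V(H)$, so under $c_H$ there is a distance $\ell$-proper $(u,v)$-path $P$ lying in $H$. The key observation is that whether a path is a distance $\ell$-proper path is an intrinsic property of that path: it depends only on the colors assigned to the edges of $P$ (no two edges of the same color may occur with fewer than $\ell$ edges between them along $P$), and $c$ agrees with $c_H$ on every edge of $P$ because $E(P)\subseteq E(H)$. Hence $P$ remains a distance $\ell$-proper $(u,v)$-path in $G$ under $c$. As $u,v$ were arbitrary, $G$ is $(1,\ell)$-proper connected under $c$, and therefore $pc_{1,\ell}(G)\le t=pc_{1,\ell}(H)$. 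The nontriviality hypothesis is used only to guarantee that there is at least one pair of vertices to connect.

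For the ``particularly'' clause, I would just note that every spanning tree $T$ of $G$ is in particular a connected spanning subgraph of $G$, so the inequality $pc_{1,\ell}(G)\le pc_{1,\ell}(T)$ is an immediate special case of the first part.

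There is essentially no obstacle in this argument; the only point deserving a moment's care is the observation highlighted above, namely that enlarging the host graph and coloring the extra edges arbitrarily cannot destroy a path that was already distance $\ell$-proper, since that property is determined entirely by the path's own edges. One should also record the trivial remark that the extension consumes no additional colors, so the color count is preserved exactly rather than merely bounded, which is what yields the stated inequality.
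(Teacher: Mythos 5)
Your proof is correct and is exactly the standard argument behind this result, which the paper itself only cites from \cite{SUBMITTED} and describes as the simple observation that adding edges cannot increase the $(1,\ell)$-proper connection number: extend an optimal coloring of $H$ arbitrarily to the extra edges of $G$, and every distance $\ell$-proper path in $H$ survives unchanged since the property depends only on the colors of the path's own edges. No gaps; the spanning-tree clause follows as you note, since a spanning tree is a connected spanning subgraph.
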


When we focus on trees, the following holds.

\begin{thm}[\cite{SUBMITTED}]\label{tree}
If $T$ is a nontrivial tree, then $pc_{1,2}(T)=\sigma'_2(T)-1$.
\end{thm}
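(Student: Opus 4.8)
The plan is to prove the two inequalities $pc_{1,2}(T)\ge \sigma'_2(T)-1$ and $pc_{1,2}(T)\le \sigma'_2(T)-1$ separately, exploiting throughout that a tree has a unique path between any two vertices. The first step I would take is to reformulate the coloring condition locally. Since paths in $T$ are unique, any $(1,2)$-proper-path coloring of $T$ must turn \emph{every} path of $T$ into a distance $2$-proper path. I would show that this is equivalent to the assertion that for every edge $uv\in E(T)$ the edges incident with $u$ or with $v$ receive pairwise distinct colors: one direction follows by examining the subpaths $w$--$u$--$w'$ and $w$--$u$--$v$--$x$ (with $w,w'\in N(u)$ and $x\in N(v)$), and the other by noting that two edges lying at distance at most $2$ along any path are either incident with a common vertex or are the two end-edges of a subpath of length $3$. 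In particular a $(1,2)$-proper-path coloring of a tree is exactly a strong edge-coloring of the tree, so $pc_{1,2}(T)=\chi'_s(T)$, and the task reduces to computing this quantity.

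For the lower bound I would choose an edge $uv$ with $\deg(u)+\deg(v)=\sigma'_2(T)$ and observe that the $\deg(u)+\deg(v)-1$ edges meeting $\{u,v\}$ (the edge $uv$ being counted once) must all get distinct colors by the reformulation; hence $pc_{1,2}(T)\ge \sigma'_2(T)-1$. For the upper bound, writing $k=\sigma'_2(T)-1$, I would construct a $(1,2)$-proper-path $k$-coloring of $T$ by induction on $|E(T)|$. If $T$ is a star $K_{1,s}$ (this handles all trees of diameter at most $2$, in particular the base cases $P_2$ and $P_3$), I would simply give its $s=\sigma'_2(T)-1$ edges distinct colors. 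Otherwise $T$ has a path of length at least $3$; I would take a longest path with initial vertices $v_0,v_1,v_2,\dots$, so that $v_0$ is a leaf and every neighbour of $v_1$ other than $v_2$ is a leaf, let $L$ be the set of those $\deg(v_1)-1$ leaves, and apply the induction hypothesis to $T'=T-L$ (in which $v_1$ becomes a leaf at $v_2$ and which is still nontrivial), using that $\sigma'_2(T')\le \sigma'_2(T)$. By the reformulation applied to the edge $v_1v_2$ of $T'$, the $\deg_T(v_2)$ edges incident with $v_2$ already carry distinct colors; I would then colour the $\deg(v_1)-1$ new edges $v_1\ell$ with $\ell\in L$ by distinct colours chosen to avoid all colours appearing at $v_2$, which is feasible since $(\deg(v_1)-1)+\deg(v_2)=\deg(v_1)+\deg(v_2)-1\le \sigma'_2(T)-1=k$. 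Finally I would verify the local condition at the only edges whose incident-edge sets changed, namely $v_1v_2$ and the new edges $v_1\ell$, which holds by construction (for $v_1\ell$ this set is just $E(v_1)$, and for $v_1v_2$ it is $E(v_1)\cup E(v_2)$, rainbow by the choice of colours), completing the induction.

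I expect the reformulation and the lower bound to be routine. The hard part will be the inductive step of the upper bound: checking that peeling the leaves at $v_1$ does not increase $\sigma'_2$, that the recolouring budget $\deg(v_1)+\deg(v_2)-1$ never exceeds $k$, and that the extended colouring remains a valid $(1,2)$-proper-path colouring at \emph{every} edge of $T$, not merely near $v_1$. Some care is also needed in organising the base cases, since the longest-path argument presumes that $T$ has diameter at least $3$.
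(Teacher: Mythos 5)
Your proposal is correct. Note that this paper does not actually prove Theorem~\ref{tree}; it is imported without proof from the companion paper \cite{SUBMITTED}, so there is no in-text argument to compare against. On its own merits, your argument is sound and complete in outline: the key reformulation (in a tree every path is the unique path between its endpoints, so a $(1,2)$-proper-path coloring must make every path distance $2$-proper, which is equivalent to the local condition that for each edge $uv$ all edges meeting $\{u,v\}$ get distinct colors, i.e.\ to a strong edge-coloring) is verified correctly in both directions, the lower bound $pc_{1,2}(T)\ge\sigma'_2(T)-1$ follows immediately from an edge realizing $\sigma'_2$, and the leaf-peeling induction for the upper bound is the classical argument for the strong chromatic index of trees ($\chi'_s(T)=\sigma'_2(T)-1$), which is exactly what the statement amounts to after your reformulation. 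The details you flag as needing care do go through: $\sigma'_2(T-L)\le\sigma'_2(T)$ since degrees only drop; $\deg_{T'}(v_2)=\deg_T(v_2)$ because no vertex of $L$ can be adjacent to $v_2$ in a tree; the color of $v_1v_2$ is automatically avoided by the new edges since it appears at $v_2$; and the only edges whose closed neighborhood of edges changes are $v_1v_2$ and the new pendant edges, exactly the ones you recheck. One could shorten the write-up by citing the known formula for $\chi'_s$ of trees after the reformulation, but your self-contained induction is equally valid.
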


For complete bipartite graphs, the situation is trickier.

\begin{thm}[\cite{SUBMITTED}]\label{bipartite}
Let $\ell \geq 2$ be an integer and $m\leq n$. Then,
\begin{eqnarray*}
pc_{1,\ell}(K_{m,n})=\left\{
\begin{array}{rcl}
n     &      & if\ m=1,\\
2     &      & if\ m\geq2\ and\ m\leq n\leq2^m,\\
3     &      & if\ \ell=2,\ m\geq2\ and\ n>2^m\\
~     &      & or\ \ell\geq3,\ m\geq2\ and\ 2^m<n\leq3^m,\\
4     &      & if\ \ell\geq3,\ m\geq2\ and\ n>3^m.
\end{array} \right.
\end{eqnarray*}
\end{thm}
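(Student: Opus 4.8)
The plan is to hang everything on one elementary fact about distance-$\ell$-proper paths: if a coloring uses $k$ colors with $k\le\ell$, then every distance-$\ell$-proper path has at most $k$ edges, because any $k+1$ consecutive edges of it are pairwise within distance $\le\ell$ and hence are forced to receive $k+1$ distinct colors; on the other hand $\ell+1$ colors already permit arbitrarily long distance-$\ell$-proper paths (use the periodic pattern $1,2,\dots,\ell+1,1,2,\dots$). Since $\mathrm{diam}(K_{m,n})=2$ when $m\ge2$, any two vertices in different parts are joined by a single edge, a trivially distance-$\ell$-proper path, so the real task is to connect two vertices lying in the same part; such a path has even length, so with $k\le\ell$ colors only paths of length $2$ are available for those pairs.

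The case $m=1$ is immediate: in $K_{1,n}$ every path has length $\le2$ and runs through the center, so the two edges at the center must be colored differently, which forces all $n$ edges distinct and $pc_{1,\ell}(K_{1,n})=n$ (also a special case of Theorem~\ref{tree}). So assume $m\ge2$, and let $B$ be the larger side. Record a coloring by the string $f_b\in[k]^m$ of colors on the edges from $b\in B$ to $A$, and view these strings as the rows of an $n\times m$ matrix over $[k]$. Then a length-$2$ path $b\,a\,b'$ with $b,b'\in B$ is distance-$\ell$-proper exactly when $f_b$ and $f_{b'}$ differ in coordinate $a$, and a length-$2$ path $a\,b\,a'$ with $a,a'\in A$ exactly when columns $a$ and $a'$ of the matrix differ. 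The lower bounds now drop out: with $2$ colors (for any $\ell\ge2$) proper paths have length $\le2$, so all $f_b$ must be pairwise distinct, hence $n\le2^m$, giving $pc_{1,\ell}\ge3$ once $n>2^m$; and for $\ell\ge3$ with $3$ colors proper paths have length $\le3$, so same-part pairs still get only length-$2$ paths, again all $f_b$ are distinct, $n\le3^m$, and $pc_{1,\ell}\ge4$ once $n>3^m$. (The bound $pc_{1,\ell}\ge2$ is trivial from any same-part pair and a one-colored graph.)

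For the matching upper bounds in the ``small $n$'' regimes ($m\le n\le2^m$ with $k=2$, and $2^m<n\le3^m$, $\ell\ge3$, with $k=3$) it suffices to exhibit an $n\times m$ matrix over $[k]$ with pairwise distinct rows and pairwise distinct columns: take as $m$ of the rows the strings $(1,\dots,1,2,1,\dots,1)$ with the $2$ running through all positions (legitimate since $m\le n$), which already makes the columns distinct, and then adjoin $n-m$ further rows chosen distinct from these and from one another among the remaining $k^m-m$ strings. Every same-part pair then has a length-$2$ distance-$\ell$-proper path, so these cases are done.

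The heart of the matter is the remaining ``large $n$'' upper bounds, $pc_{1,2}(K_{m,n})\le3$ for all $n$ and $pc_{1,\ell}(K_{m,n})\le4$ for $\ell\ge3$ and all $n$, where length-$2$ paths cannot handle all same-part pairs and one must route through longer paths built around a single connector vertex. Fix $a_1,a_2\in A$ and $b^\ast\in B$. For $pc_{1,2}\le3$: put $c(a_1b)=c(a_2b)=1$ for all $b\ne b^\ast$ and $c(a_1b^\ast)=2$, $c(a_2b^\ast)=3$; then for $b,b'\ne b^\ast$ the path $b\,a_1\,b^\ast\,a_2\,b'$ has color pattern $1,2,3,1$, every three consecutive edges of which are distinct, so it is distance-$2$-proper, while a pair containing $b^\ast$ is handled by the length-$2$ path through $a_1$ (colors $2,1$). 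For $pc_{1,\ell}\le4$ with $\ell\ge3$: put $c(a_1b)=3$, $c(a_2b)=4$ for $b\ne b^\ast$ and $c(a_1b^\ast)=1$, $c(a_2b^\ast)=2$; now $b\,a_1\,b^\ast\,a_2\,b'$ is rainbow ($3,1,2,4$), hence distance-$\ell$-proper for every $\ell$, and pairs with $b^\ast$ again use a length-$2$ path. In both constructions all pairs inside $B$ are covered, the edges from $B$ to $A\setminus\{a_1,a_2\}$ remain free, and since there are $k^n\ge m$ possible columns we may pick columns $a_3,\dots,a_m$ pairwise distinct and distinct from columns $a_1,a_2$, which settles all pairs inside $A$. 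I expect this last step to be the main obstacle: one must carefully verify that the long paths through $b^\ast$ really are distance-$\ell$-proper for the relevant $\ell$ — this is precisely where the pattern $1,2,3,1$ stops working once $\ell\ge3$ and a fourth color becomes unavoidable — that the connector choices are consistent with a legitimate choice of the free columns, and that no pair, in particular none involving $b^\ast$, has been missed.
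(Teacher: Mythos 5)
Your proposal is correct and complete as an argument. Note, however, that this paper does not prove Theorem~\ref{bipartite} at all: it is imported from the companion paper \cite{SUBMITTED}, so there is no in-paper proof to compare against. Your route is the natural one for this statement and almost certainly parallels the cited proof: the observation that a coloring with $k\le\ell$ colors forces every distance-$\ell$-proper path to have at most $k$ edges (any $k+1$ consecutive edges must receive pairwise distinct colors) reduces same-side pairs to length-$2$ paths, the encoding of the coloring as an $n\times m$ matrix over $[k]$ turns the lower bounds into the row-distinctness inequalities $n\le 2^m$ and $n\le 3^m$, the indicator-row matrices give the $2$- and $3$-color upper bounds in the small-$n$ regimes, and the connector constructions through $b^\ast$ (pattern $1,2,3,1$, which is distance-$2$-proper but not distance-$3$-proper, versus the rainbow pattern $3,1,2,4$) give $pc_{1,2}\le 3$ and $pc_{1,\ell}\le 4$ for all $n$; the final column-distinctness step is unproblematic since the free columns live in $[k]^n$ with $k^n\ge m$. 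The only loose phrasing is that a length-$2$ path $a\,b\,a'$ is proper exactly when the columns of $a$ and $a'$ differ \emph{in row $b$} (existence of such a path needs only that the columns differ somewhere), but your construction ensures this, so nothing is missing.
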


For a general $2$-connected graph, we gave in \cite{SUBMITTED} an upper bound for the $(1,2)$-proper connection number.

\begin{thm}[\cite{SUBMITTED}]\label{$2$-connected}
If a graph $G$ is $2$-connected, then $pc_{1,2}(G)\leq5$.
\end{thm}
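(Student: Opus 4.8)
\noindent\emph{Plan of proof.}
The plan is to argue by induction along an open ear decomposition of $G$. First recall that a path is distance $2$-proper precisely when every three consecutive edges on it receive pairwise distinct colors; hence \emph{any} path becomes distance $2$-proper under the periodic coloring $1,2,3,1,2,3,\dots$ of its edges, and in particular any cycle $C_n$ satisfies $pc_{1,2}(C_n)\le 3$: color $e_1,\dots,e_n$ cyclically by $1,2,3,1,2,3,\dots$, and between two vertices use whichever of the two arcs avoids the bounded-size ``seam'' where the pattern wraps around (an arc of length $1$ or $2$ is automatically distance $2$-proper, and any longer arc disjoint from the seam carries the periodic pattern). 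By Proposition~\ref{spanning} we may delete edges of $G$ as long as $2$-connectivity is preserved, so we may assume $G$ is minimally $2$-connected; this is convenient (it controls the shape and number of the ears) but not strictly necessary.

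Fix an open ear decomposition $G=G_0\cup E_1\cup\cdots\cup E_s$ with $G_0$ a cycle and each $E_i$ an ear attached to $G_{i-1}$ at two distinct vertices. We show by induction on $i$ that $G_i$ has a $(1,2)$-proper $5$-coloring satisfying an extra invariant $\Pi$ which controls, for every vertex $v$ of $G_i$, the colors of the first two edges of the distance $2$-proper paths that are used to reach the other vertices (roughly: a bounded set of admissible ``starting patterns'' at each $v$, enough for the later splicing steps). The base case $G_0=C_n$ is settled by the periodic coloring above, whose arcs clearly satisfy $\Pi$.

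For the inductive step let $E_{i+1}=v_0u_1\cdots u_tv_1$ with $v_0,v_1\in V(G_i)$. If $t=0$ the ear is a chord: no new vertices appear, every old pair keeps its old distance $2$-proper path, we color the chord arbitrarily, and $\Pi$ is unaffected. If $t\ge 1$, color the interior edges $u_1u_2,\dots,u_{t-1}u_t$ periodically with $\{1,2,3\}$, and choose the colors of the (at most two) ear-edges near $v_0$ and the (at most two) near $v_1$ from $\{1,\dots,5\}$ so that, using the freedom that $\Pi$ grants at $v_0$ and at $v_1$: (a) the whole ear is internally distance $2$-proper; and (b) for every $w\in V(G_i)$ at least one of the following two routes is distance $2$-proper: the ear-segment from $u_j$ to $v_0$ followed by a distance $2$-proper path of $G_i$ from $v_0$ to $w$, or the analogous concatenation through $v_1$ — here the only thing to check is the three-edge condition straddling the attachment vertex, and since the two attachments are colored independently and $\Pi$ forbids only boundedly many colors at each, two colors beyond $\{1,2,3\}$ always suffice to thread the constraints. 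New pairs inside the ear are joined by the obvious ear-subpath, and old pairs are untouched because old edges keep their colors. Finally one verifies that $G_{i+1}$ satisfies $\Pi$, the relevant starting patterns at the new vertices $u_j$ being read off the ear-edges just colored.

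The heart of the argument — and the main obstacle — is pinning down an invariant $\Pi$ that is simultaneously (i) strong enough that the three-edge junction conditions at both ends of a new ear can always be met (note these conditions reach \emph{two} edges deep on each side, not one, because the proper-ness is at distance $2$), and (ii) weak enough to survive the gluing of the next ear, including the degenerate short ears $t=1$ and $t=2$, where there is no interior left for a periodic buffer, and the case where an ear endpoint falls on a ``seam'' vertex of the base cycle. It is exactly this two-edges-deep bookkeeping at the two attachment points that costs two extra colors on top of the three needed along a path, which is where the bound $5$ comes from; the same heuristic would suggest $2\ell+1$ for the $(1,\ell)$-analogue, and indeed reproduces the known bound $3$ for ordinary proper connection ($\ell=1$).
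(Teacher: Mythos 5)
Your proposal is not a proof but a plan: the entire mathematical content of the argument is delegated to the invariant $\Pi$, which you never define. (Note also that the statement you are proving is quoted here from \cite{SUBMITTED}; the present paper contains no proof of it, so your sketch must stand on its own.) The preliminary observations are fine — a path is distance $2$-proper iff every three consecutive edges get distinct colors, the periodic $1,2,3$ coloring handles paths and, via the ``seam'' argument, cycles, and Proposition~\ref{spanning} lets you pass to a minimally $2$-connected graph. But from that point on, every step that actually carries weight is asserted rather than proved: you claim that ``two colors beyond $\{1,2,3\}$ always suffice to thread the constraints'' at the two attachment vertices, yet this is precisely the statement that the bound $5$ encodes, and it cannot be checked without knowing exactly what $\Pi$ promises at $v_0$ and $v_1$ and what it must deliver at the new vertices $u_j$. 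You yourself flag this as ``the main obstacle,'' which is an accurate self-assessment.

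The gap is not merely cosmetic. Because properness here reaches two edges deep, the invariant must control, for every ordered pair of vertices, the colors of the \emph{first two} edges of some distance $2$-proper path between them, and this control must be re-established at the interior vertices of each new ear. For a new vertex $u_j$, the route to an old vertex $w$ may have to leave the ear through $v_0$ for some $w$ and through $v_1$ for others, so the ``starting pattern'' available at $u_j$ is not a single bounded set chosen once but depends on the target; a short ear ($t=1$ or $t=2$) has no periodic buffer, so the two junction constraints interact with each other through the same one or two interior edges; and an attachment at a seam vertex of the base cycle further restricts which patterns $\Pi$ can offer. Whether all these constraints can be simultaneously satisfied with exactly two spare colors is a nontrivial case analysis (this is the analogue of the ``strong property'' bookkeeping used to prove $pc(G)\leq 3$ for $2$-connected graphs, where formulating the right strengthened induction hypothesis is the whole difficulty). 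Until you write down $\Pi$ explicitly, prove it holds for the base cycle including its seam, and verify the junction and short-ear cases, the claimed bound $pc_{1,2}(G)\leq 5$ has not been established.
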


\section{$(1,2)$-proper connection number for the complement of a graph}\label{Sect:complements}

In this section, we investigate the $(1,2)$-proper connection number of $G$ depending on some properties of its complement $\overline G$.

\begin{thm}\label{diam $4$}
If $G$ is a graph with $diam(\overline G)\geq4$, then $pc_{1,2}(G)\leq3$.
\end{thm}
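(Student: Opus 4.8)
The plan is to exploit the classical fact that if $\mathrm{diam}(\overline G)\geq 4$, then $\overline G$ is quite sparse and its complement $G$ is correspondingly dense. Concretely, I would first recall the standard lemma that $\mathrm{diam}(\overline G)\geq 3$ forces $\mathrm{diam}(G)\leq 3$, and in fact $\mathrm{diam}(\overline G)\geq 4$ forces $\mathrm{diam}(G)\leq 2$; more than that, the structure of $\overline G$ when its diameter is at least $4$ is very restrictive. The key observation I would make is: pick vertices $u,v$ in $\overline G$ with $d_{\overline G}(u,v)\geq 4$. Then in $\overline G$ no vertex can be adjacent to both $u$ and $v$, and moreover the neighborhoods behave like a layered partition. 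Translating to $G$: every vertex of $G$ other than $u,v$ is adjacent in $G$ to $u$ or to $v$ (or both), and $uv\in E(G)$, so $G$ has a dominating edge $uv$. That is the structural skeleton I would build the coloring on.

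Next I would set up the coloring explicitly. Let $A = N_G(u)\setminus\{v\}$, $B = N_G(v)\setminus\{u\}$, and note $A\cup B = V(G)\setminus\{u,v\}$ by the previous paragraph (a vertex may lie in both). I would use three colors, say $1,2,3$. Color the edge $uv$ with color $3$. Color all edges from $u$ to $A$ with color $1$ and all edges from $v$ to $B$ with color $2$; for a vertex $w\in A\cap B$ decide arbitrarily whether it is served through $u$ (then $uw$ gets color $1$) or through $v$ (then $vw$ gets color $2$), and likewise for $w\in B$, the edge $vw$ gets color $2$. The remaining edges of $G$ (those inside $A\cup B$, and any extra edges $u$ or $v$ might have) can be colored arbitrarily, say all with color $3$, or indeed colors can be reused freely since we only need to certify one distance-$2$-proper path per pair. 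Then for any two vertices $x,y\in V(G)$, I would exhibit a path of length at most $3$ (typically $x\,u\,v\,y$ or $x\,u\,y$ or $x\,v\,y$, using the edge $uv$ as the middle) and check that consecutive edges plus the "distance $2$" requirement are satisfied — a path of length at most $3$ only needs its three edges to use, respectively, a color, then a different color, then a color different from both the first and second positions, which is easy to arrange with the pattern $1,3,2$ on the spine $x\text-u\text-v\text-y$.

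The main obstacle I anticipate is the bookkeeping for vertices $w \in A\cap B$ together with the short cases: pairs $x,y$ that are both in $A$ (path $x\,u\,y$ has colors $1,1$ — bad!), so for two vertices served through $u$ I must route one of them via $v$ instead, i.e. go $x\,u\,v\,y$ with colors $1,3,2$, which needs $y\in B$; if $y\notin B$ this fails and I need $y$ to have some neighbor in $B\cup\{v\}$ other than $u$, which is exactly where the denseness of $G$ (equivalently, a careful use of $d_{\overline G}(u,v)\geq 4$ rather than just $\geq 3$) must be invoked. So the real content is a short case analysis on where $x$ and $y$ sit relative to $A$, $B$, $A\cap B$, and $\{u,v\}$, showing in each case that the three colors suffice to produce a distance-$2$-proper path; I would organize this as: (i) one of $x,y$ is in $\{u,v\}$; (ii) $x\in A\setminus B$, $y\in B\setminus A$; (iii) $x,y$ both in $A$ (symmetric for both in $B$), where I use $uv$ and a neighbor of $y$ on the $v$-side, which exists because $y\notin N_{\overline G}[u]\cap N_{\overline G}[v]$ is impossible given the diameter hypothesis. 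Once these cases are dispatched, $pc_{1,2}(G)\leq 3$ follows.
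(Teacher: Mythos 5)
Your structural setup (a pair $u,v$ with $d_{\overline G}(u,v)\geq 4$ gives a dominating edge $uv$ of $G$) is fine, but the proof has a genuine gap exactly at the case you defer, namely two vertices $x,y\in A\setminus B$ (or symmetrically in $B\setminus A$) that are nonadjacent in $G$. With your coloring ($uA$ edges colored $1$, $vB$ edges colored $2$, $uv$ colored $3$, interior edges arbitrary/color $3$) there may be no distance-$2$-proper $x$\--$y$ path at all. Concretely, let $\overline G$ be the connected graph on $\{u,p,q,x,y,v\}$ with edges $up,pq,qx,xv,yv,xy$, so $d_{\overline G}(u,v)=4$; in $G$ one gets $A\setminus B=\{x,y\}$ with $xy\notin E(G)$, and checking all $x$\--$y$ paths under your coloring, each one contains either two equal colors on adjacent edges ($x\,u\,y$ gives $1,1$; $x\,p\,y$ gives $3,3$) or two equal colors separated by a single edge ($x\,u\,v\,p\,y$ gives $1,3,2,3$), so none is distance-$2$-proper. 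Your proposed repair does not hold up: the remark that ``$y\in N_{\overline G}[u]\cap N_{\overline G}[v]$ is impossible'' is vacuous (that intersection is automatically empty once the distance is at least $3$) and does not produce the neighbor of $y$ in $B\cup\{v\}$ that your reroute needs; indeed in the problematic case $y$ has, by definition, no edge to $v$ in $G$.

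What is actually needed, and missing, is the stronger consequence of distance at least $4$ (not just $3$): there is no $\overline G$-path $u\,b\,a\,v$, hence in $G$ every vertex of $A\setminus B=N_{\overline G}(v)$ is adjacent to every vertex of $B\setminus A=N_{\overline G}(u)$, and both sets are nonempty when $\overline G$ is connected. With that fact one can route $x\,u\,v\,w\,y$ through some $w\in B\setminus A$, but then the interior edge $wy$ must receive a specific color (it must avoid the colors of $vw$ and $uv$), so the cross edges between $A\setminus B$ and $B\setminus A$ cannot be colored arbitrarily; one must prescribe them (for instance, give all edges at one designated vertex $w_0\in B\setminus A$ color $1$ and the remaining cross edges color $2$) and then verify the handful of resulting cases. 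So your overall strategy is salvageable and would in fact differ from the paper's proof, which instead takes BFS layers $N_0,\dots,N_4$ from an eccentric vertex of $\overline G$ and does a case analysis on $|N_1|,|N_3|,|N_4|$ (invoking the complete bipartite result in one case); but as written, your argument omits the key structural lemma and the careful coloring of the cross edges, which is precisely where the content of the theorem lies.
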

\pf We first claim that $G$ must be connected. If not, $\overline G$ must contain a spanning complete bipartite graph which implies that $diam(\overline G)\leq2$, a contradiction. Choose a vertex $x$ with $ecc_{\overline G}(x)=diam(\overline G)$. Let $N_i(x)=\{v:dist_{\overline G}(x,v)=i\}$ for $0\leq i\leq3$ and  $N_4(x)=\{v:dist_{\overline G}(x,v)\geq4\}$. Obviously $N_0(x)=\{x\}$. We write $N_i$ (for $0\leq i\leq4$) instead of $N_i(x)$ and $n_i$ instead of $|N_i|$ for convenience. It can be deduced that all edges are present in $G$ of the form $uv$ where $u\in N_1$ and $v\in N_3\bigcup N_4$ or $u\in N_2$ and $v\in N_4$ (see Figure~\ref{fig1}).
\begin{figure}[H]
\begin{center}
\includegraphics[scale = 0.9]{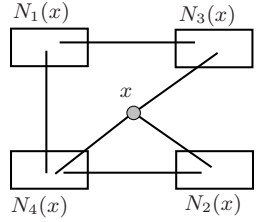}
\caption{The graph $G$ for the proof of Theorem \ref{diam $4$}.}\label{fig1}
\end{center}
\end{figure}

We denote by $N_{i,j}(0\leq i\neq j\leq4)$ the edge set between $N_i$ and $N_j$ in $G$. We distinguish four cases and give each of the cases a $(1,2)$-proper-path $3$-coloring, respectively. Again we use $f(e)(e\in E(\overline G))$ to represent the color assigned to $e$.

Case 1. If $n_4>1$. We give all edges of $N_{1,3}$ the color $3$, edges of $N_{0,3}$ the color $3$, edges of $N_{0,4}$ the color $2$, edges of $N_{0,2}$ the color $3$, edges of $N_{2,4}$ the color $1$. Additionally, color the edges of $N_{1,4}$ such that for $v\in N_1$, $\{f(vs):s\in N_4\}=\{1,2\}$. Then for any $u,v\in N_1(if~n_1>1)$, there must exist $s_1,s_2\in N_4$ (possibly with $s_{1} = s_{2}$) such that $f(us_1)=1$ and $f(vs_2)=2$. Then one of $us_{1}v$ or $us_1xss_2v$, where $s\in N_2$, is a distance $2$-proper $(u,v)$-path. Other situations can be checked similarly.

Case 2. If $n_4=1$, $n_3>1$ and $n_1=1$. Then we give all edges of $N_{1,3}$ the color $1$, the edge of $N_{1,4}$ the color $3$, edges of $N_{0,3}$ the color $1$, edges of $N_{0,4}$ the color $2$, edges of $N_{0,2}$ the color $1$ and edges of $N_{2,4}$ the color $3$. It is easy to verify this is indeed a $(1,2)$-proper-path $3$-coloring of $G$.

Case 3. If $n_4=1$, $n_3>1$ and $n_1>1$. Let $G'$ be the complete bipartite graph $G'=G[N_1\cup N_3]$. By Theorem~\ref{bipartite}, we can use at most three colors to make $G'$ $(1,2)$-proper connected. Then we give all edges of $N_{1,4}$ the color $1$, edges of $N_{0,3}$ the color $2$, the edge of $N_{0,4}$ the color $3$, edges of $N_{0,2}$ the color $1$ and edges of $N_{2,4}$ the color $2$. One can easily check this is a $(1,2)$-proper-path $3$-coloring of $G$ and we omit the details here.

Case 4. If $n_4=1$ and $n_3=1$. Then we give all edges of $N_{1,3}$ the color $1$, edges of $N_{1,4}$ the color $1$, the edge of $N_{0,3}$ the color $2$, the edge of $N_{0,4}$ the color $3$, edges of $N_{0,2}$ the color $2$ and edges of $N_{2,4}$ the color $1$. We can again verify the correctness easily.

Thus, the proof is completed.\qed

\begin{thm}\label{diam $3$}
For a graph $G$, if $\overline G$ is triangle-free and $diam(\overline G)=3$, then $pc_{1,2}(G)\leq3$.
\end{thm}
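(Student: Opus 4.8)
The plan is to imitate the structure of the proof of Theorem~\ref{diam $4$}, working with the BFS layers of $\overline G$ around a vertex of eccentricity $3$. Since $\overline G$ is triangle-free with $\mathrm{diam}(\overline G)=3$, first I would check that $G$ is connected (if not, $\overline G$ contains a spanning complete bipartite subgraph, forcing $\mathrm{diam}(\overline G)\le 2$). Choose $x$ with $ecc_{\overline G}(x)=3$ and set $N_i=N_i(x)=\{v:\mathrm{dist}_{\overline G}(x,v)=i\}$ for $i=0,1,2,3$, with $n_i=|N_i|$ and $N_0=\{x\}$. Translating to $G$: every vertex of $N_1$ is adjacent in $G$ to every vertex of $N_3$; $x$ is adjacent in $G$ to every vertex of $N_2\cup N_3$; and, crucially, because $\overline G$ is triangle-free, $\overline G[N_1]$ has no edges, so $N_1$ is a clique in $G$, and likewise no edge of $\overline G$ lies inside $N_1$ (the triangle-free hypothesis is exactly what replaces the "distance $\ge 4$" slack used in Theorem~\ref{diam $4$}). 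I would record which pairs of layers are joined by a complete bipartite graph in $G$ and which edges may be missing.

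Next I would split into cases according to the sizes $n_1$ and $n_3$ (and possibly whether $n_2=0$), exactly as in Theorem~\ref{diam $4$}. The backbone of every coloring is the complete bipartite graph $G[N_1\cup N_3]$ together with the star-like connections through $x$. When $\min\{n_1,n_3\}\ge 2$, Theorem~\ref{bipartite} gives a $(1,2)$-proper-path coloring of $G[N_1\cup N_3]$ with at most $3$ colors; then I would colour the edges $N_{0,2}$, $N_{0,3}$ and any $N_{1,2}$, $N_{2,3}$ edges present so that, for every pair of vertices, a short distance-$2$-proper path can be routed either directly inside $N_1\cup N_3$, or via $x$, or via one intermediate vertex of $N_2$. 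The clique on $N_1$ and the near-universal adjacencies of $x$ give a lot of freedom: for $u,v\in N_1$ I can use the edge $uv\in E(G)$ directly, or a path $u\,s_1\,x\,s_2\,v$ with $s_1,s_2\in N_3$; for $u\in N_1$, $v\in N_2$ I can use $u\,x\,v$ or reroute through $N_3$; and so on. The small cases $n_1=1$ or $n_3=1$ would be handled with explicit $3$-colorings, as in Cases 2 and 4 of Theorem~\ref{diam $4$}.

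The main obstacle I anticipate is the vertices of $N_2$ and the edges incident to them. A vertex $w\in N_2$ need not be adjacent in $G$ to all of $N_1$ (its $\overline G$-neighbours in $N_1$ are exactly those it reaches at distance $1$), so paths from $N_2$ to $N_1$ or between two vertices of $N_2$ cannot always go directly and may have to detour through $x$ or through $N_3$; I must make sure the colors on $N_{0,2}$, $N_{1,2}$, $N_{2,3}$ do not clash with the fixed backbone coloring when these detours are forced. Triangle-freeness of $\overline G$ helps again here: it limits how $\overline G$ can behave inside $N_1\cup N_2$ and between consecutive layers, which constrains the missing $G$-edges enough to keep the routing possible with three colors. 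The bookkeeping of "which pair of vertices uses which of the few candidate paths, and checking each such path is distance-$2$-proper under the proposed coloring" is the routine but lengthy part; I would organize it by the pair of layers containing the two endpoints and verify each entry of that table.
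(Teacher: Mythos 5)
Your setup (BFS layers of $\overline G$ from a vertex $x$ of eccentricity $3$, the complete bipartite $N_1$--$N_3$ connections in $G$, $x$ joined in $G$ to $N_2\cup N_3$, and $N_1$ a clique in $G$ by triangle-freeness) matches the paper, but the proposal stops exactly where the content of the theorem lies. The hard pairs are two $G$-nonadjacent vertices inside $N_2$ (and inside $N_3$), and for these you only assert that triangle-freeness ``constrains the missing $G$-edges enough to keep the routing possible.'' That is the claim that needs proof. The actual argument is short but essential: if $u,v\in N_2$ are nonadjacent in $G$, then $uv\in E(\overline G)$, and any $\overline G$-neighbour $w\in N_1$ of $u$ must satisfy $wv\in E(G)$ (otherwise $u,v,w$ is a triangle in $\overline G$); this yields the concrete path $u\,x\,t\,w\,v$ with $t\in N_3$, whose colors one then checks are distance-$2$-proper. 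An analogous use of triangle-freeness with a vertex of $N_2$ handles nonadjacent pairs in $N_3$ via $u\,x\,w\,v$. Without this lemma and an explicit coloring under which these specific paths can be verified, the proposal is a plan rather than a proof.

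Moreover, your choice of backbone works against that verification. If the edges of $G[N_1\cup N_3]$ are colored by an uncontrolled $(1,2)$-proper-path $3$-coloring from Theorem~\ref{bipartite}, then the detour paths above use $N_{1,3}$ edges whose colors you do not know, so you cannot check the color pattern along $u\,x\,t\,w\,v$; and in fact no pair of vertices needs the bipartite machinery here ($N_1$ is a clique, $N_1$--$N_3$ pairs have direct edges, and pairs in $N_2$ or $N_3$ are routed through $x$), which is why the paper simply fixes one explicit coloring (e.g.\ all $N_{1,3}$ edges color $2$, $N_{0,3}$ color $1$, $N_{0,2}$ and $N_{1,2}$ and $G[N_1]$ color $3$, $N_{2,3}$ color $2$) with no case analysis on $n_1,n_3$ at all. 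Finally, note the slip that $x$ is \emph{not} adjacent in $G$ to any vertex of $N_1$ (those are exactly the $\overline G$-edges at $x$), so your proposed path $u\,x\,v$ for $u\in N_1$, $v\in N_2$ does not exist; such pairs must be routed as $u\,t\,x\,v$ with $t\in N_3$, which again requires control of the $N_{1,3}$ colors.
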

\pf As in the proof of Theorem~\ref{diam $4$}, it is easy to show that $G$ is connected. Choose a vertex $x$ such that $ecc_{\overline G}(x)=diam(\overline G)=3$. In addition, $N_i$, $n_i$ and $N_{i,j}$ for $0 \leq i \neq j \leq 3$ are defined as in the previous theorem. Again it can be deduced that there exist all edges of the form $uv$ where $u\in N_0$ and $v\in N_2\cup N_3$ or where $u\in N_1$ and $v\in N_3$. Since $\overline G$ is triangle-free and $x$ has all edges to $N_{1}$ in $\overline{G}$, we know that $N_1$ is a clique in $G$. We give a $(1,2)$-proper-path $3$-coloring for $G$ as follows.

We assign to the edges of $N_{0,2}$ the color $3$, edges of $N_{0,3}$ the color $1$, edges of $N_{1,3}$ the color $2$, any edges of $N_{1,2}$ the color $3$, any edges of $N_{2,3}$ the color $2$ and the edges of the induced subgraph $G[N_1]$ the color $3$.

It is obvious that for any $u\in N_i$ and $v\in N_j(i\neq j)$, there exists a distance $2$-proper path between them. Then it suffices to show that for any $u,v\in N_2$ or $N_3$, there is a distance $2$-proper path connecting them in $G$. First suppose $u,v\in N_2$ and there is no edge between them in $G$. Since $\overline G$ is triangle-free, there exists a vertex $w\in N_1$ such that $wv\in G$, then $uxtwv$ is a distance $2$-proper path between $u$ and $v$, where $t\in N_3$. The situation for any vertices $u,v\in N_3$ can be dealt with similarly. Thus $pc_{1,2}(G)\leq3$.\qed

\begin{thm}\label{diam $2$}
Let $G$ be a connected graph. If $\overline G$ is triangle free and $diam(\overline G)=2$, then $pc_{1,2}(G)\leq3$.
\end{thm}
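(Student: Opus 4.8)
The plan is to mimic the structure of the previous two theorems: fix a vertex $x$ with $ecc_{\overline G}(x)=diam(\overline G)=2$, partition $V(G)$ into $N_0=\{x\}$, $N_1$ (the $\overline G$-neighbors of $x$), and $N_2$ (the rest), and observe that in $G$ every edge from $x$ to $N_2$ is present, and — since $\overline G$ is triangle-free and $x$ is joined in $\overline G$ to all of $N_1$ — that $N_1$ induces a clique in $G$. So $G$ contains a large structured subgraph: a clique on $N_1$, the full join from $x$ to $N_2$, and (by triangle-freeness of $\overline G$) the fact that any two nonadjacent vertices of $N_2$ have no common $\overline G$-neighbor, which forces lots of $G$-edges between $N_1$ and $N_2$ and inside $N_2$. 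First I would record precisely which $G$-edges are guaranteed: all of $N_{0,2}$; all of $G[N_1]$; for any $u,v\in N_2$ nonadjacent in $G$, the only possible common non-neighbor in $G$ is $x$, hence every $w\in N_1$ is adjacent in $G$ to at least one of $u,v$; similarly any $v\in N_2$ nonadjacent to $w\in N_1$ in $G$ shares the $\overline G$-edge $vw$, so $v,w$ have no other common $\overline G$-neighbor.

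Next I would propose an explicit $3$-coloring in the same spirit as Theorem~\ref{diam $3$}: color all edges of $N_{0,2}$ with color $3$, all edges of $G[N_1]$ with color $3$, every $N_{1,2}$-edge with color $3$, and then use colors $1$ and $2$ to handle the edges inside $N_2$ and possibly refine $N_{1,2}$. The natural pattern is to properly $2$-color so that short paths through $x$ alternate: a path of the form $u x v$ with $u,v\in N_2$ needs the two incident edges to differ, so $N_{0,2}$ cannot be monochromatic unless we route differently — hence I would actually split $N_{0,2}$ or route $u,v\in N_2$ through $N_1$ via $u\,w\,w'\,v$ or $u\,w\,v$ using a $G$-edge $w\in N_1$ adjacent to both (when it exists) or through two clique vertices. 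The key reductions are: (i) pairs in different parts are easy, using one edge or a length-two/three path through $x$ and $N_1$; (ii) pairs $u,v\in N_1$ are joined by the clique edge $uv$; (iii) pairs $u,v\in N_2$: if $uv\in E(G)$ use it, otherwise pick $w\in N_1$ adjacent in $G$ to both (guaranteed when some such $w$ exists), giving $u\,w\,v$, and in the degenerate case where $N_1$ is small I would fall back on a path through $x$ together with a fourth vertex.

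The main obstacle — and where the case analysis will really live — is the degenerate small cases: when $n_1$ or $n_2$ is $1$, or when the bipartite $G$-graph between $N_1$ and $N_2$ is sparse (which happens exactly when many $\overline G$-edges go between $N_1$ and $N_2$, constrained by triangle-freeness). In particular if $n_1=0$ then $x$ is isolated in $\overline G$, contradicting $ecc_{\overline G}(x)=2$, and if $n_2=0$ then $diam(\overline G)\le 1$; so both are nonempty, but I still need to check $n_1=1$ and $n_2=1$ separately, and also the subcase where $G[N_2]$ has an isolated vertex relative to $N_1$. I expect that in every such subcase the triangle-free hypothesis on $\overline G$ gives exactly the one extra $G$-edge needed to complete a distance-$2$-proper path of length at most $4$, and that three colors (with color $3$ as the "backbone" color on all edges touching $x$ or lying in the $N_1$-clique, and colors $1,2$ alternating on the remaining edges) suffice. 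I would present the coloring, verify the generic case in one paragraph, and then dispatch the two or three boundary cases explicitly, exactly as the proofs of Theorems~\ref{diam $4$} and~\ref{diam $3$} do.
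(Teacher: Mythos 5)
Your setup matches the paper's: the same partition $N_0=\{x\}$, $N_1$, $N_2$, the same observations that all $N_{0,2}$ edges are present in $G$, that $N_1$ induces a clique in $G$, and that triangle-freeness of $\overline G$ forces, for nonadjacent $u,v\in N_2$, every $w\in N_1$ to be $G$-adjacent to at least one of $u,v$. But beyond that point what you have is a plan rather than a proof, and the plan as stated does not work. The coloring you sketch (color $3$ on $N_{0,2}$, on $G[N_1]$ and on all $N_{1,2}$ edges) makes both the path $u\,x\,v$ and the path $u\,w\,v$ ($w\in N_1$) improper, as you yourself note; you then say you would ``split $N_{0,2}$ or refine $N_{1,2}$'' without specifying how, and the remainder rests on ``I expect that in every such subcase the triangle-free hypothesis gives exactly the one extra edge needed.'' Moreover, your main routing device for nonadjacent $u,v\in N_2$ --- pick $w\in N_1$ adjacent in $G$ to both --- is not guaranteed to exist: triangle-freeness only gives that each $w\in N_1$ hits at least one of $u,v$, and $N_1$ may split into vertices seeing only $u$ and vertices seeing only $v$. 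One must then route $u\,s\,t\,v$ through the clique, and on a three-edge path all three colors must be pairwise distinct, which requires a deliberate coloring of the $N_{1,2}$ edges. The paper does exactly this in its main case: it colors the edges from each $u\in N_2$ to $N_1$ with colors from $\{1,2\}$ so that both colors appear whenever $e_G(u)\geq 2$, gives $G[N_1]$ color $3$, and uses the counting bound $e_G(v)+e_G(w)\geq |N_1|\geq 3$ (valid only when $|N_1|\geq 3$) to guarantee a suitable path $v\,s\,w$ or $v\,s\,t\,w$.

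The degenerate cases you flag but do not resolve are precisely where the remaining work lies, and neither is routine. If some $v_0\in N_2$ has no $G$-edge to $N_1$, one must first prove (from triangle-freeness together with the fact that every $N_2$-vertex has a $\overline G$-neighbor in $N_1$) that $v_0$ is $G$-adjacent to every other vertex of $N_2$, and then use $v_0$ as a hub with an explicit $3$-coloring; this is the paper's first case. If $|N_1|=2$, the counting argument above collapses; the paper shows each $N_2$-vertex is then $G$-adjacent to exactly one of the two $N_1$-vertices, so $N_2$ splits into two cliques, and it gives a bespoke coloring for that configuration. (Also note $n_1=1$ is impossible since it would isolate a vertex of $G$, a fact worth stating rather than listing as an open subcase.) Without these structural facts and explicit colorings, your outline cannot be completed as written: identifying where the difficulties sit is not the same as dispatching them.
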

\pf First we choose a vertex $x$ with $ecc_{\overline G}(x)=diam(\overline G)=2$. In addition, $N_i,~n_i$ and $N_{i,j}$ are defined as above. Clearly, all edges of the form $xv$ for $v\in N_2$ are present in $G$. Again $N_1$ is a clique in $G$ since all edges of the form $xu$ are in $\overline{G}$ for $u \in N_{1}$ and $\overline{G}$ is triangle free.

Suppose there exists a vertex $v_0\in N_2$ such that no edge $vw(w\in N_1)$ exists in $G$. Then $v_0$ is adjacent to every vertex of $N_1$ in $\overline G$. Thus, since every vertex of $N_{2}$ has at least one edge to $N_{1}$ in $\overline{G}$, the vertex $v_0$ must be adjacent to every other vertex of $N_2$ in $G$ since otherwise a triangle will appear in $\overline G$. Next we give an edge coloring $f$ for $G$. We set $f(xv_0)=3$, $f(xw)=2$ and $f(v_0w)=1~(w\in N_2, w\neq v_0)$. And we give any edges of $N_{1,2}$ the color $2$, the edges of the induced subgraph $G[N_1]$ the color $3$. We only need to consider the $2$-proper path for $w_1,w_2\in N_2$ and $w_1v_0xw_2$ clearly suffices.

Next suppose there exists no such vertex $v_0$. Since $G$ and $\overline{G}$ connected, we know that $n_1\geq2$. We denote by $E_G(v)$ (for $v\in N_2$) the set of edges between $v$ and vertices of $N_1$ in $G$ and set $e_G(v)=|E_G(v)|$. Also $e_{\overline G}(v)$ (for $v\in N_2$) is defined similarly. Again we distinguish two cases to analyze.

If $|N_1|\geq3$, for each $u\in N_2$ with $e_G(u)=1$, we give this edge the color $1$. And for $u\in N_2$ with $e_G(u)\geq2$, we arbitrarily color these edges but confirm that $\{f(e):~e\in E_G(u)\}=\{1,2\}$. Then we set $f(xu)=2~(u\in N_2)$ and give the edges of the induced subgraph $G[N_1]$ the color $3$. The rest edges are colored arbitrarily with colors from $[3]$. Again we only need to consider the distance $2$-proper path between the two non-adjacent vertices $v,w\in N_2$. Since $|N_1|\geq3$ and $v$ and $w$ are non-adjacent in $G$, so $e_{\overline G}(v)+e_{\overline G}(w)\leq|N_1|$. Thus $e_G(v)+e_G(w)\geq|N_1|\geq3$ which implies that one of the vertices $v,w$, say $v$, must have $e_G(v)\geq2$. So there exists one vertex $s\in N_1$ or two vertices $s,t\in N_1$ such that $vsw$ or $vstw$ is a distance $2$-proper $(v,w)$-path in $G$.

If $|N_1|=2$ and $N_1=\{s,t\}$. Then each vertex $u\in N_2$ is adjacent to only one vertex of $N_1$ in $G$, either $s$ or $t$ since otherwise $diam(\overline{G}) \geq 3$. We denote by $V_1$ the set of vertices of $N_2$ adjacent to $s$ in $G$, that is, the set adjacent to $t$ in $\overline G$. And we write $V_2$ for the rest of the vertices of $N_2$. It is easy to see that $V_1$ and $V_2$ both induce cliques in $G$. We then set $f(xu)~(u\in V_1)=1$, $f(us)~(u\in V_1)=2$, $f(xu)~(u\in V_2)=2$, $f(ut)~(u\in V_2)=1$, $f(st)=3$ and color any remaining edges with color $1$. It is easy to check that this is a $(1,2)$-proper-path $3$-coloring of $G$. Thus the proof is completed.\qed

\section{Nordhaus-Gaddum-Type theorem for $(1,2)$-proper connection number}\label{Sect:NG}

In this section, we first characterize the graphs on $n$ vertices with $(1,2)$-proper connection number $n-1$ or $n-2$, which is crucial to investigate the Nordhaus-Gaddum-Type result for the $(1,2)$-proper connection number of the graph $G$. We use $C_n,S_n$ to denote the cycle and the star graph on $n$ vertices, respectively. Denote by $T(n_1,n_2)$ the double star in which the degrees of its (adjacent) center vertices are $n_1+1$ and $n_2+1$ respectively. Additionally, we write $T^1(n_1,n_2)$ as the graph obtained by replacing one pendent edge with $P_3$ in the double star $T(n_1,n_2)$ and denote the new pendent vertex by $u_0$ (see Figure \ref{fig2}). Also define graphs $G_{1}, \dots, G_{8}$ as in Figure~\ref{fig2}.

\begin{figure}[H]
\begin{center}
\includegraphics[scale = 0.8]{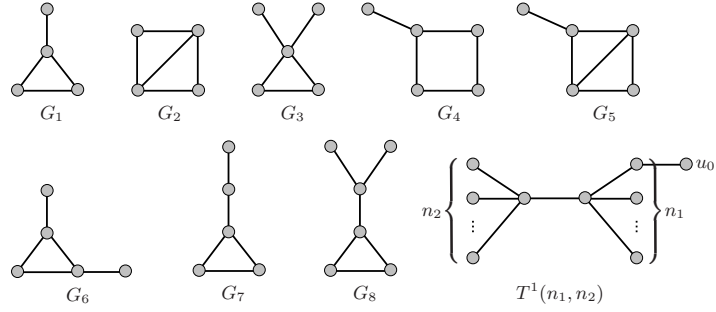}
\caption{Graphs $G_i~(1\leq i\leq 8)$ and $T^1(n_1,n_2)$ in $\mathcal{G}_2$.}\label{fig2}
\end{center}
\end{figure}

\begin{thm}\label{decrible n-1 and n-2}
Let $G$ be a nontrivial connected graph on $n\geq 2$ vertices. Then

$(\romannumeral 1)$ $pc_{1,2}(G)=n-1$ if and only if $G\in \mathcal{G}_1=\{S_n\ (n\geq 2),~T(n_1,n_2)\ (n_1,n_2\geq 1)\}$;

$(\romannumeral 2)$ $pc_{1,2}(G)=n-2$ if and only if $G\in \mathcal{G}_2=\{C_3,~C_4,~C_5,~G_1,~G_2,~G_3,~G_4,\\~G_5,~G_6,
G_7,~G_8,~T^1(n_1,n_2)\}$.
\end{thm}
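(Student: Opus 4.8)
The plan is to establish the two characterizations by first pinning down a small number of structural invariants that force $pc_{1,2}(G)$ to be large, and then doing a finite case analysis. The key tool is Theorem~\ref{tree} together with Proposition~\ref{spanning}: for any spanning tree $T$ of $G$ we have $pc_{1,2}(G)\le pc_{1,2}(T)=\sigma'_2(T)-1$, and in particular $pc_{1,2}(G)\le pc_{1,2}(P_n)=1$ whenever $G$ has a Hamilton path, while more generally the existence of a spanning tree with small $\sigma'_2$ keeps $pc_{1,2}(G)$ small. Conversely, a trivial lower bound is $pc_{1,2}(G)\ge \Delta(G)$ when $\Delta(G)\ge 2$ (the edges at a maximum-degree vertex, together with any further edge on a path through two of them, must receive distinct colors within distance $2$), and more carefully $pc_{1,2}(G)\ge \sigma'_2(G)-1$ is false in general but the weaker bound $pc_{1,2}(G)\ge \Delta(G)$ and an analysis near a vertex of degree $n-2$ or $n-1$ will do most of the work. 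So the first step is: if $pc_{1,2}(G)\ge n-2$ then $G$ must be ``close to a star,'' i.e. it has a vertex of very high degree and very few edges overall; I would prove $m(G)\le n$ or so in this regime by showing that extra edges always produce a spanning tree (or spanning subgraph) with strictly smaller $\sigma'_2$, or a $2$-connected block to which Theorem~\ref{$2$-connected} applies giving $pc_{1,2}\le 5 < n-2$ once $n\ge 8$.

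For part (i), the ``if'' direction is the content of Theorem~\ref{tree}: $\sigma'_2(S_n)=n$ so $pc_{1,2}(S_n)=n-1$, and for the double star $T(n_1,n_2)$ with $n_1+n_2=n-2$ the two adjacent centers have degrees $n_1+1$ and $n_2+1$ summing to $n$, so $pc_{1,2}(T(n_1,n_2))=n-1$. For the ``only if'' direction I would argue: $pc_{1,2}(G)=n-1$ forces, via $pc_{1,2}(G)\le pc_{1,2}(T)=\sigma'_2(T)-1\le n-1$ for every spanning tree $T$, that every spanning tree of $G$ has $\sigma'_2(T)=n$, i.e. every spanning tree of $G$ is a star or a double star; an easy argument shows a graph all of whose spanning trees are stars/double stars is itself a star or a double star (if $G$ had a spanning tree that is a path on $\ge 4$ vertices, or a ``spider'' with a long leg, we are done; otherwise $G$ has diameter $\le 3$ and very few non-leaf vertices). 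Then one checks $G$ has no extra edges beyond those of $S_n$ or $T(n_1,n_2)$, since any extra edge creates a cycle and hence a spanning tree that is a path of length $\ge 3$ or a caterpillar with $\sigma'_2\le n-1$.

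For part (ii) the strategy is the same but the finite analysis is larger. The ``if'' direction requires exhibiting, for each of $C_3,C_4,C_5$, the eight sporadic graphs $G_1,\dots,G_8$, and the family $T^1(n_1,n_2)$, both an explicit $(1,2)$-proper-path $(n-2)$-coloring and a proof that $n-3$ colors do not suffice; for $C_3,C_4,C_5$ this is direct, for $T^1(n_1,n_2)$ one uses that deleting $u_0$ leaves a double star needing $n-2$ colors while the pendant path $P_3$ can reuse colors, and the lower bound comes from the two adjacent centers of degree-sum $n-1$ plus the extra structure of the $P_3$. The ``only if'' direction is the main obstacle: one must show that $pc_{1,2}(G)=n-2$ together with $G\notin\mathcal G_1$ forces $G$ into the list. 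Here I would again bound $m(G)$ and $\mathrm{diam}(G)$ from above using the spanning-tree bound and Theorem~\ref{$2$-connected} (so only $n\le 7$ survives as a potentially large $2$-connected or near-$2$-connected case), reduce to graphs with at most, say, $n+1$ edges and at most two or three ``heavy'' vertices, and then enumerate: graphs whose every spanning tree has $\sigma'_2\in\{n-1,n\}$, minus those already in $\mathcal G_1$. The hard part will be controlling the interaction between the lower bound (which is local, coming from a high-degree vertex and its neighborhood) and the upper bound (which needs a global coloring), and making sure the enumeration of small cases — especially distinguishing which small caterpillars and unicyclic graphs land at exactly $n-2$ versus drop to $n-3$ — is complete; I expect this to require a careful, somewhat lengthy but elementary, case-by-case verification of colorings on graphs with at most $7$ vertices together with the two infinite families $S_n$, $T(n_1,n_2)$, $T^1(n_1,n_2)$.
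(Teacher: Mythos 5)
Your outline contains steps that are false as stated, and they sit exactly where the real work of the theorem lies. First, the ``trivial lower bound'' $pc_{1,2}(G)\ge\Delta(G)$ is not true: the definition only requires \emph{one} distance $2$-proper path between each pair, so edges meeting at a maximum-degree vertex need not get distinct colors; e.g.\ $K_{2,4}$ has $\Delta=4$ but $pc_{1,2}(K_{2,4})=2$ by Theorem~\ref{bipartite}. Likewise $pc_{1,2}(P_n)=1$ is wrong; Theorem~\ref{tree} gives $pc_{1,2}(P_n)=\sigma'_2(P_n)-1=3$ for $n\ge4$, so ``traceable $\Rightarrow$ $pc_{1,2}=1$'' fails (the correct consequence is $pc_{1,2}\le 3$). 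More seriously, in part (i) your claim that a graph all of whose spanning trees are stars or double stars must itself be a star or double star is false: $C_3$, $C_4$ and the paw (triangle plus a pendant edge, the graph $G_1$) are counterexamples. Your proposed repair --- that any extra edge creates a spanning tree with $\sigma'_2\le n-1$ --- fails for precisely these critical graphs: every spanning tree of the paw is $S_4$ or $P_4=T(1,1)$, with $\sigma'_2=n$. For such graphs the drop from $n-1$ to $n-2$ (or lower) cannot be detected by any spanning tree; it must be certified by an explicit $(1,2)$-proper-path coloring of the non-tree graph itself. That is exactly what the paper's proof consists of: after the spanning-tree reduction (which you do share, via Proposition~\ref{spanning} and Theorem~\ref{tree}), it exhibits colorings with at most $n-2$ colors for \emph{every} graph obtained by adding a single edge to $S_n$ or to $T(n_1,n_2)$, organized into finitely many patterns covering the infinite families (Figures~\ref{fig3}--\ref{fig8}). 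Your proposal omits this content entirely.

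For part (ii) the same problem recurs and is compounded: the ``only if'' direction is left as a plan (``enumerate \dots I expect this to require a careful, somewhat lengthy \dots verification''), but that enumeration --- adding one edge to $S_n$, $T(n_1,n_2)$ or $T^1(n_1,n_2)$, identifying which augmentations stay at $n-2$ (only $C_3,C_4,G_1,G_3,G_4,G_6,G_7,G_8$, then $G_2,G_5$ and $C_5$ at the second round), and showing all others, including the case distinction on whether the new edge uses the vertex $u_0$ of $T^1(n_1,n_2)$, fall to $n-3$ --- \emph{is} the proof, not a routine appendix to it. Your suggested shortcut via Theorem~\ref{$2$-connected} (``only $n\le 7$ survives'') also does not close the case analysis, since graphs with cut vertices are untouched by that bound and the relevant graphs here (double stars with one extra edge, $T^1(n_1,n_2)+e$, etc.) are typically not $2$-connected; nor is the asserted bound on $m(G)$ substantiated, and with the false $\Delta$-lower bound removed there is no mechanism left in the proposal to force $G$ to be ``close to a star'' other than the spanning-tree argument the paper already uses. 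In short, the skeleton (spanning trees plus Theorem~\ref{tree}) matches the paper, but the steps you add to avoid the case-by-case colorings are incorrect, and the case-by-case colorings are precisely what is missing.
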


\begin{proof}
Let $G$ be a connected graph of order $n\geq 2$ and $T$ be a spanning tree of $G$. Proposition~\ref{spanning} shows that $pc_{1,2}(G)\leq pc_{1,2}(T)$. Now we give proofs for $(\romannumeral 1)$ and $(\romannumeral 2)$ separately.

Proof of $(\romannumeral 1)$: For any graph $G\in \mathcal{G}_1$, we can easily check that $pc_{1,2}(G)=n-1$. So it remains to verify the converse. Since $pc_{1,2}(G)=n-1$, we see that $n-1=pc_{1,2}(G)\leq pc_{1,2}(T)\leq n-1$, i.e., $pc_{1,2}(T)=n-1$. Thus, by Theorem~\ref{tree}, we know that any spanning tree $T$ of $G$ must be a star or a double star, i.e., $T\in \mathcal{G}_{1}$.
Without loss of generality, we can assume that $n_2\geq n_1$.
\begin{figure}[H]
\begin{center}
\includegraphics[scale = 0.9]{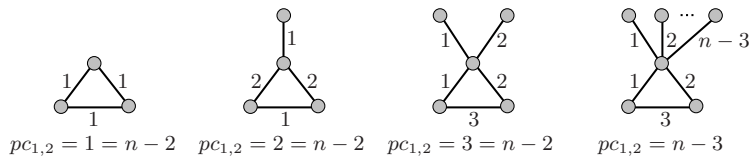}
\caption{Graphs obtained by adding an edge to  $S_n\ (n\geq 2)$.}\label{fig3}
\end{center}
\end{figure}

If $G$ is a tree, then $G\in \mathcal{G}_{1}$. 
Now we suppose that $G$ is not a tree. Then since $T \in \mathcal{G}_{1}$, $G$ can be constructed from $S_n\ (n\geq 2)$ or $T(n_1,n_2)\ (n_1,n_2\geq 1)$ by adding edges. Adding an edge to $S_n\ (n\geq 2)$, we will obtain one of the graphs depicted in Figure~\ref{fig3}. However, all the graphs in Figure \ref{fig3} have $(1,2)$-proper connection number no more than $n-2$, which implies that any spanning tree $T$ of $G$ cannot be a star. Next, we will consider the graphs obtained by adding edges to $T(n_1,n_2)\ (n_1,n_2\geq 1)$.

If $n_1=n_2=1$, then $T(1,1)=P_4$. If an edge is added, then we will obtain either the cycle $C_4$ or the graph $G_1$ depicted in Figure \ref{fig2}. Obviously, both $C_4$ and $G_1$ have $(1,2)$-proper connection number $2=n-2<n-1$. For the cases $n_1=1,~n_2=2$ and $n_1=n_2=2$, one of the graphs in Figure~\ref{fig4} or~\ref{fig5} will be obtained by adding an edge to $T(1,2)$ or $T(2,2)$ respectively. The $(1,2)$-proper-path colorings given in Figures~\ref{fig4} and~\ref{fig5} show that all these graphs have $(1,2)$-proper connection number no more than $n-2$.
\begin{figure}[H]
\begin{center}
\includegraphics[scale = 0.9]{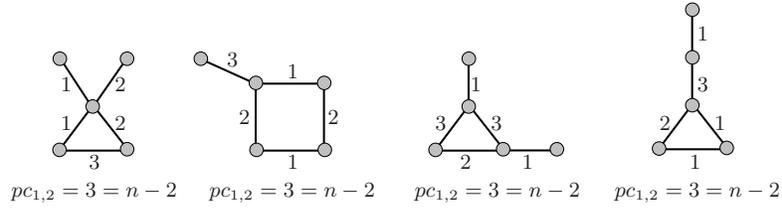}
\caption{Graphs obtained by adding an edge to $T(1,2)$.}\label{fig4}
\end{center}
\end{figure}

\begin{figure}[H]
\begin{center}
\includegraphics[scale = 0.9]{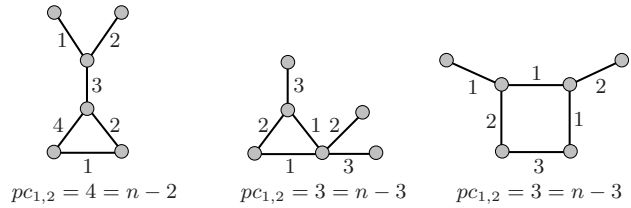}
\caption{Graphs obtained by adding an edge to $T(2,2)$.}\label{fig5}
\end{center}
\end{figure}

For all the other situations, i.e., $n_1=1,~n_2\geq 3$ or $n_1=2,~n_2\geq 3$ or $n_1\geq 3,~n_2\geq 3$, Figure \ref{fig6}, Figure \ref{fig7} and Figure \ref{fig8} give all the graphs obtained by adding an edge to $T(1,n_2\geq 3)$, $T(2,n_2\geq 3)$ and $T(n_1\geq 3,n_2\geq 3)$, respectively. We give $(1,2)$-proper-path colorings for these graphs showed in Figure \ref{fig6}, Figure \ref{fig7} and Figure \ref{fig8}. One can easily check that all these graphs have $(1,2)$-proper connection number no more than $n-2$.

From the discussions all above, we come to a conclusion that if $pc_{1, 2}(G) = n - 1$, then $G\in \mathcal{G}_{1} = \{S_n\ (n\geq 2),~T(n_1,n_2)(n_1,n_2\geq 1)\}$.
\begin{figure}[H]
\begin{center}
\includegraphics[scale = 0.9]{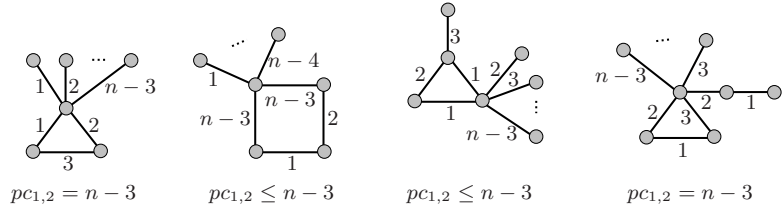}
\caption{Graphs obtained by adding an edge to $T(1,n_2\geq 3)$.}\label{fig6}
\end{center}
\end{figure}

\begin{figure}[H]
\begin{center}
\includegraphics[scale = 0.9]{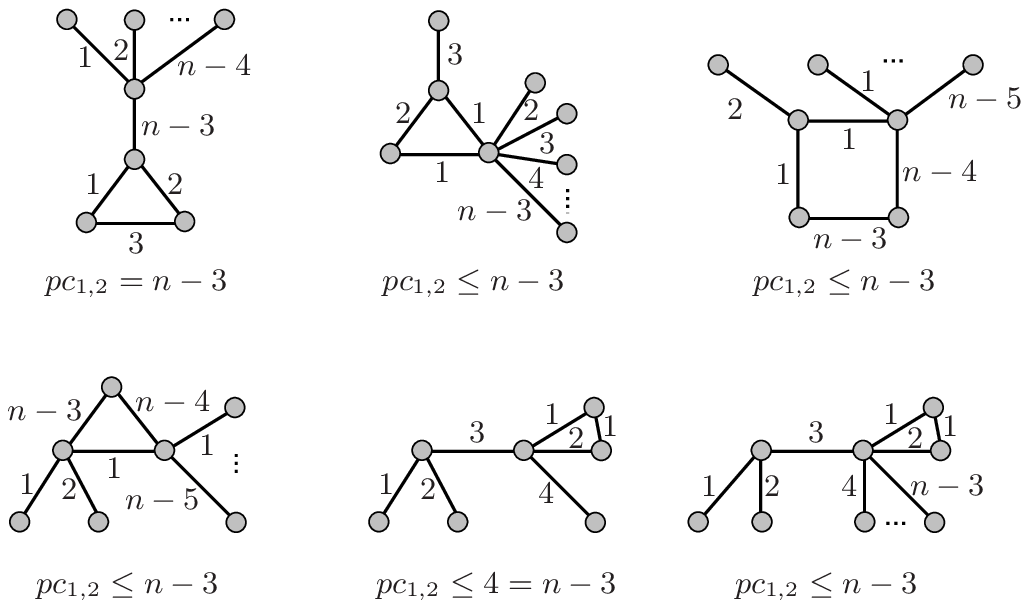}
\caption{Graphs obtained by adding an edge to $T(2,n_2\geq 3)$.}\label{fig7}
\end{center}
\end{figure}

\begin{figure}[H]
\begin{center}
\includegraphics[scale = 0.9]{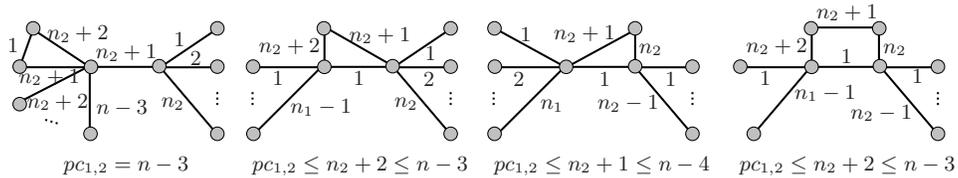}
\caption{Graphs obtained by adding an edge to $T(n_1\geq 3,n_2\geq 3)$.}\label{fig8}
\end{center}
\end{figure}

Proof of $(\romannumeral 2)$: One can easily check that $pc_{1,2}(G)=n-2$ for any graph $G\in \mathcal{G}_2$. Hence, it remains to show the converse. Since $pc_{1,2}(G)=n-2$, then $n-2\leq pc_{1,2}(T)\leq n-1$. Thus, Theorem~\ref{tree} implies that any spanning tree $T$ of $G$ must be an element of the set $\{S_n\ (n\geq 2),~T(n_1,n_2)\ (n_1,n_2\geq 1),~T^1(n_1,n_2)\ (n_1,n_2\geq 1)\}$.

If $G$ is a tree, then $G\cong T^1(n_1,n_2)\ (n_1,n_2\geq 1) \subseteq \mathcal{G}_{2}$. Next we suppose that $G$ is not a tree. Then $G$ can be constructed from $S_n\ (n\geq 2)$, $T(n_1,n_2)\ (n_1,n_2\geq 1)$ or $T^1(n_1,n_2)\ (n_1,n_2\geq 1)$ by adding edges. In the proof of $(\romannumeral 1)$, we listed eight graphs with $(1,2)$-proper connection number $n-2$, which are $C_3,~C_4,~G_1,~G_3,~G_4,~G_6,~G_7$ and $G_8$, respectively. Furthermore, all graphs obtained by adding an edge to $S_n\ (n\geq 2)$ or $T(n_1,n_2)\ (n_1,n_2\geq 1)$ except these eight ones have $(1,2)$-proper connection number no more than $n-3$. Therefore, the graph $G$ can be constructed from $C_3,~C_4,~G_1,~G_3,~G_4,~G_6,~G_7,~G_8$ or $T^1(n_1,n_2)\ (n_1,n_2\geq 1)$ by adding edges.
\begin{figure}[H]
\begin{center}
\includegraphics[scale = 0.9]{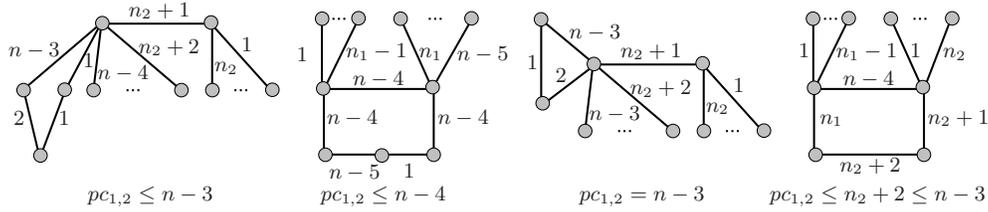}
\caption{Graphs obtained by adding an edge to $T^1(n_1\geq 2,n_2\geq 2)$.}\label{fig9}
\end{center}
\end{figure}

Considering graphs constructed from $C_3,~C_4,~G_1,~G_3,~G_4,~G_6,~G_7$ or $G_8$ by adding edges, we find only another two graphs $G_2,~G_5$ with $pc_{1,2}(G_2)=2=|V(G_2)|-2$ and $pc_{1,2}(G_5)=3=|V(G_5)|-2$. All others have $(1, 2)$-proper connection number no more than $n - 3$. Now we focus on the graphs obtained by adding an edge to $T^1(n_1,n_2)\ (n_1,n_2\geq 1)$. For the cases $n_1=n_2=1$, $n_1=1,~n_2\geq 2$ and $n_1\geq 2,~n_2=1$, we find another graph $C_5$ such that $pc_{1,2}(C_5)=n-2$ with similar analysis as in the proof of $(\romannumeral 1)$. Denote by $e$ the new edge added to $T(n_1,n_2)\ (n_1,n_2\geq 1)$ or $T^1(n_1,n_2)\ (n_1,n_2\geq 1)$ and $T(n_1,n_2)+e$, $T^1(n_1,n_2)+e$ the newly obtained graphs. For the case $n_1\geq 2,~n_2\geq 2$, we consider cases depending on whether the pendent vertex $u_0$ in $T^1(n_1,n_2)$ is an end vertex of $e$ or not. It is obvious that if $u_0\notin e$, then $T^1(n_1,n_2)+e\setminus u_0\cong T(n_1,n_2)+e$. The proof of $(\romannumeral 1)$ suggests that we only need to consider the case when $T^1(n_1,n_2)+e\setminus u_0\cong G_8$. It is easy to check that $pc_{1,2}(T^1(n_1,n_2)+e)=n-3<n-2$ for this case. If $u_0\in e$, then one of the graphs in Figure~\ref{fig9} will be obtained by adding an edge to $T^1(n_1,n_2)$. However, all these graphs have $(1,2)$-proper connection number no more than $n-3$ (as colored in the figure). Thus, we complete the proof of $(\romannumeral 2)$.
\end{proof}

\begin{thm}\label{Nordhaus-Guddum}
Let $G$ and $\overline{G}$ be connected graphs on $n$ vertices. Then $pc_{1,2}(G)+pc_{1,2}(\overline{G})\leq n+2$ and the equality holds if and only if $G$ or $\overline{G}$ is isomorphic to a double star, i.e., $G\cong T(n_1,n_2)\ (n_1,n_2\geq 1)$ or $\overline{G}\cong T(n_1,n_2)\ (n_1,n_2\geq 1)$.
\end{thm}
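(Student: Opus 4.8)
The plan is to combine the characterization in Theorem~\ref{decrible n-1 and n-2} with the earlier results of Section~\ref{Sect:complements}. First I would dispose of the easy direction: if $G\cong T(n_1,n_2)$ is a double star, then $\overline G$ is connected only in the relevant range, and one checks directly that $pc_{1,2}(G)=n-1$ by Theorem~\ref{tree}, while $\overline G$ has diameter~$2$ and is easily seen to admit a $3$-coloring (indeed $pc_{1,2}(\overline G)=3$ can be verified by hand, or bounded using Theorem~\ref{diam $2$} together with a direct check that the small exceptions do not arise), giving the sum $n+2$. By symmetry the same holds if $\overline G$ is a double star. This establishes that the stated configurations achieve equality.

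For the upper bound $pc_{1,2}(G)+pc_{1,2}(\overline G)\le n+2$, the idea is a case split on $pc_{1,2}(G)$, using the trivial bound $pc_{1,2}(H)\le n-1$ for any connected graph $H$ on $n$ vertices (which follows from Proposition~\ref{spanning} and Theorem~\ref{tree}, since a spanning tree has $\sigma'_2\le n$). If $\max\{pc_{1,2}(G),pc_{1,2}(\overline G)\}\le 3$ then the sum is at most $6\le n+2$ whenever $n\ge 4$, and the tiny cases $n\le 3$ are checked directly. So assume without loss of generality $pc_{1,2}(G)\ge 4$; it then suffices to prove $pc_{1,2}(\overline G)\le n+2-pc_{1,2}(G)$, and since $pc_{1,2}(G)\le n-1$ it is enough to show $pc_{1,2}(\overline G)\le 3$ whenever $pc_{1,2}(G)\ge 4$, i.e.\ whenever $pc_{1,2}(G)\notin\{n-1,n-2,\dots\}$ is large — more precisely one wants: if $pc_{1,2}(G)=n-1$ then $pc_{1,2}(\overline G)\le 3$; if $pc_{1,2}(G)=n-2$ then $pc_{1,2}(\overline G)\le 4$; and if $pc_{1,2}(G)=n-j$ with $j\ge 3$ then $pc_{1,2}(\overline G)\le n-j+2\le n-1$, which is automatic. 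Thus the whole theorem reduces to the two statements:
\[
pc_{1,2}(G)=n-1\ \Longrightarrow\ pc_{1,2}(\overline G)\le 3,\qquad
pc_{1,2}(G)=n-2\ \Longrightarrow\ pc_{1,2}(\overline G)\le 4 .
\]

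To handle these I would invoke Theorem~\ref{decrible n-1 and n-2}. If $pc_{1,2}(G)=n-1$, then $G\in\{S_n,T(n_1,n_2)\}$; but $\overline{S_n}$ is disconnected, so $G\cong T(n_1,n_2)$, and then $\overline G$ (when connected) has a dominating edge-complement structure forcing $diam(\overline G)\le 2$, whence $pc_{1,2}(\overline G)\le 3$ — I would get this either from Theorems~\ref{diam $2$}/\ref{diam $3$}/\ref{diam $4$} after observing $\overline G$ is triangle-free here, or by exhibiting an explicit $3$-coloring of $\overline{T(n_1,n_2)}$; this is the case where equality $n+2$ occurs. If $pc_{1,2}(G)=n-2$, then $G$ lies in the finite list $\mathcal G_2=\{C_3,C_4,C_5,G_1,\dots,G_8,T^1(n_1,n_2)\}$ of Theorem~\ref{decrible n-1 and n-2}(ii); for each fixed small graph $C_3,C_4,C_5,G_1,\dots,G_8$ the complement is a specific small graph whose $(1,2)$-proper connection number is computed by hand (and one checks the sum never exceeds $n+1<n+2$), while for the infinite family $T^1(n_1,n_2)$ one must show $pc_{1,2}(\overline{T^1(n_1,n_2)})\le 4$, and in fact strictly less than $n-2+4$: here $\overline{T^1(n_1,n_2)}$ again has small diameter, so Theorem~\ref{diam $2$} or~\ref{diam $3$} (after checking the triangle-free hypothesis, or arguing directly) yields a bound of $3$ or $4$, keeping the sum below $n+2$. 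I expect the main obstacle to be precisely this last family: verifying that $\overline{T^1(n_1,n_2)}$ is connected, determining its diameter and whether it is triangle-free (it generally is not, so the clean theorems of Section~\ref{Sect:complements} may not apply verbatim and a tailored $3$- or $4$-coloring must be produced), and confirming the strict inequality so that equality is attained only for the double star. The remaining bookkeeping — the finitely many small graphs and the $n\le 4$ base cases — is routine case-checking.
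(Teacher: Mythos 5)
Your reduction collapses at the step where you dismiss the range $pc_{1,2}(G)=n-j$ with $j\ge 3$ as ``automatic.'' If $pc_{1,2}(G)=n-j$, the bound you must supply is $pc_{1,2}(\overline G)\le (n+2)-(n-j)=j+2$, not $n-j+2$; the trivial estimate $pc_{1,2}(\overline G)\le n-1$ gives this only when $j\ge n-3$, i.e.\ only when $pc_{1,2}(G)\le 3$. So the case you wave away --- both $pc_{1,2}(G)$ and $pc_{1,2}(\overline G)$ lying strictly between $3$ and $n-2$ --- is not covered by anything in your outline (a priori the sum could be as large as $2(n-3)>n+2$ once $n\ge 9$), and it is precisely where the paper spends almost all of its effort: if both $G$ and $\overline G$ are $2$-connected, Theorem~\ref{$2$-connected} gives $pc_{1,2}\le 5$ for each, which settles $n\ge 9$, while $n=6,7,8$ need ad hoc circumference/traceability arguments; if instead some graph, say $G$, has a cut vertex, then $\overline G$ contains a spanning complete bipartite graph (possibly after setting aside the cut vertex or pendant neighbours), and Theorem~\ref{bipartite} yields $pc_{1,2}(\overline G)\le 4$, which combined with $pc_{1,2}(G)\le n-3$ gives sum at most $n+1$; the pendant-edge subcase ($n_1=1$) requires a further layer of analysis with the graphs $G^1,G^2,G^3$ and $H_1,\dots,H_5$. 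None of this structural work (connectivity split, bipartite spanning subgraphs of the complement, small-$n$ checks) appears in your proposal, and your intermediate claim ``it is enough to show $pc_{1,2}(\overline G)\le 3$ whenever $pc_{1,2}(G)\ge 4$'' is both unproved and stronger than what the paper establishes.

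The parts you do address are broadly in line with the paper: equality for double stars via an explicit $3$-coloring of $\overline{T(n_1,n_2)}$ (though note $\overline G$ there is far from triangle-free --- $A\cup B$ induces a clique --- and the two centers are at distance $3$, not $2$, which is exactly what forces $pc_{1,2}(\overline G)=3$), and the use of Theorem~\ref{decrible n-1 and n-2} to handle $pc_{1,2}\in\{n-1,n-2\}$, where the paper likewise checks that members of $\mathcal G_1\cup\mathcal G_2$ other than double stars give a sum below $n+2$. But as written, the proposal proves the theorem only in the cases $pc_{1,2}(G)\in\{n-1,n-2\}$ or $\max\{pc_{1,2}(G),pc_{1,2}(\overline G)\}\le 3$; the remaining (and main) case is a genuine gap.
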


\begin{proof}
Since both $G$ and $\overline{G}$ are connected, we have $n\geq 4$ and $\Delta(G),~\Delta(\overline{G})\leq n-2$. Let $G$ be the double star with center vertices $u,v$ and $N_G(u)\setminus v=A,~N_G(v)\setminus u=B$. So, $\overline{G}[A\cup B]$ is a clique and $N_{\overline{G}}(u)=B,~N_{\overline{G}}(v)=A$. Certainly all edges of $G$ must have distinct colors so we consider colorings of $\overline{G}$. Color all edges incident to $v$ with $1$, all edges incident to $u$ with $2$ and edges in $\overline{G}[A\cup B]$ with $3$. This coloring shows that $pc_{1,2}(\overline{G}) \leq 3$. Since $u$ and $v$ are at distance $3$ in $\overline{G}$, we get that $pc_{1, 2}(\overline{G}) = 3$ and so $pc_{1,2}(G)+pc_{1,2}(\overline{G})=n+2$. Now, we must show that $pc_{1,2}(G)+pc_{1,2}(\overline{G})<n+2$ for all other connected graphs $G$ and $\overline{G}$. One can easily check that this is true for $n=4,~5$. So we consider $n\geq 6$ in the following.

If $G$ or $\overline{G}$ has $(1,2)$-proper connection number $n-1$ or $n-2$, i.e., $G\in \mathcal{G}_1\cup \mathcal{G}_2\setminus T(n_1,n_2)\ (n_1,n_2\geq 1)$ or $\overline{G}\in \mathcal{G}_1\cup \mathcal{G}_2\setminus T(n_1,n_2)\ (n_1,n_2\geq 1)$, then $pc_{1,2}(G)+pc_{1,2}(\overline{G})<n+2$ by simple examination. Hence, we can assume that $2\leq pc_{1,2}(G)\leq n-3$ and $2\leq pc_{1,2}(\overline{G})\leq n-3$.

Suppose first that both $G$ and $\overline{G}$ are $2$-connected. For $n=6$, it is easy to check that $pc_{1,2}(G)+pc_{1,2}(\overline{G})\leq 3+3<8=n+2$. And for $n\geq 9$, Theorem \ref{$2$-connected} implies that $pc_{1,2}(G)+pc_{1,2}(\overline{G})\leq 5+5=10<11\leq n+2$. Then what remains are the cases $n=7$ and $n=8$. For convenience, we denote the circumference of $G$ by $c(G)$. We first suppose $n=7$. Obviously $4\leq c(G)\leq 7$. If $c(G)=7$, then $C_7$ is a spanning subgraph of $G$ and $pc_{1,2}(G)\leq pc_{1,2}(C_7)=3$. If $c(G)=6$, then $G$ has a traceable spanning subgraph which is composed of $C_6$ by adding an open ear of length two. Thus, $pc_{1,2}(G)\leq 3$. If $c(G)=5$, then $G$ contains $H^7_1$ or $H^7_2$ (see Figure \ref{fig10}) as a spanning subgraph. Since $H^7_1$ is traceable and $pc_{1,2}(H^7_2)\leq 3$, then $pc_{1,2}(G)\leq 3$. For the case $c(G)=4$, $G$ contains $K_{2,5}$ as its spanning subgraph, which contradicts the assumption that $\overline{G}$ is connected. Therefore, all $2$-connected graphs of order $n=7$ with connected complementary graphs has $(1,2)$-proper connection number no more than $3$. Hence, $pc_{1,2}(G)+pc_{1,2}(\overline{G})\leq 3+3<9=n+2$. With similar analysis as for the situation $n=7$, we can also draw the conclusion that $pc_{1,2}(G)+pc_{1,2}(\overline{G})\leq 3+3<10=n+2$ for $n=8$.
\begin{figure}[H]
\begin{center}
\includegraphics[scale = 0.9]{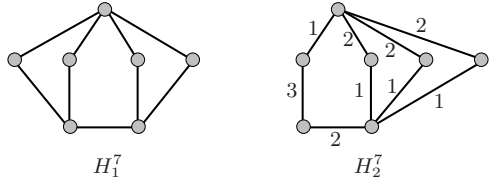}
\caption{Graphs for the proof of Theorem \ref{Nordhaus-Guddum}.}\label{fig10}
\end{center}
\end{figure}

Now we consider the case where at least one of $G$ and $\overline{G}$ has at least one cut vertex. Without loss of generality, suppose that $G$ has at least one cut vertex. We distinguish the following two cases.

\textbf{Case 1:} $G$ has a cut vertex $u$ such that $G-u$ has at least three components.

Let $G_1,~G_2,\cdots,G_k~(k\geq 3)$ be the components of $G-u$, and let $n_i$ be the number of vertices of $G_i$ for $i=1,2,\dots,k$ with $n_1\leq n_2\leq \cdots\leq n_k$. Since $\Delta(G)\leq n-2$, then $n_k\geq 2$. The complementary graph $\overline{G}\setminus u$ contains $K_{n_k,n-n_k-1}$ as a spanning subgraph and both $n_k \geq 2$ and $n-n_k-1\geq 2$. By Theorem~\ref{bipartite}, there exists a $(1,2)$-proper-path $3$-coloring of $K_{n_k,n-n_k-1}$ using elements in $[3]$. Then, if we color the edges incident to $u$ in $\overline{G}$ with color $4$, then we obtain a $(1,2)$-proper-path $4$-coloring of $\overline{G}$. Therefore, $pc_{1,2}(G)+pc_{1,2}(\overline{G})\leq (n-3)+4=n+1<n+2$.

\textbf{Case 2:} Each cut vertex $u$ of $G$ satisfies that $G-u$ has only two components.

Let $G_1,~G_2$ be the two components of $G-u$, and let $n_i$ be the number of vertices of $G_i$ for $i=1,2$ with $n_1\leq n_2$.  Since $n\geq6$, then $n_2\geq 2$.

\textbf{Subcase 2.1:} $n_1\geq 2$.
The complementary graph $\overline{G}\setminus u$ contains $K_{n_1,n_2}$ as a spanning subgraph. By Theorem~\ref{bipartite}, there is a coloring of $K_{n_{1}, n_{2}}$ with colors in $[3]$, and we color the edges incident to $u$ in $\overline{G}$ with color $4$. This gives a $(1,2)$-proper-path $4$-coloring of $\overline{G}$. As a result, $pc_{1,2}(G)+pc_{1,2}(\overline{G})\leq n-3+4=n+1<n+2$ as desired.

\textbf{Subcase 2.2:} $n_1=1$, i.e., each cut vertex of $G$ is incident with a pendent edge.

Since $n\geq 6$, then $n_2\geq 4$. Let $\{u_1,~u_2,\dots,u_{\ell}\}$ be the set of all cut vertices of $G$, and let $u_1v_1,~u_2v_2,\dots,u_{\ell}v_{\ell}$ be the pendent edges incident to these cut vertices in $G$. Set $H=G\setminus\{v_1,~v_2,\dots,v_{\ell}\}$, so $H$ is $2$-connected. By Theorem~\ref{$2$-connected}, we know that $pc_{1,2}(H)\leq 5$.
\begin{figure}[H]
\begin{center}
\includegraphics[scale = 0.9]{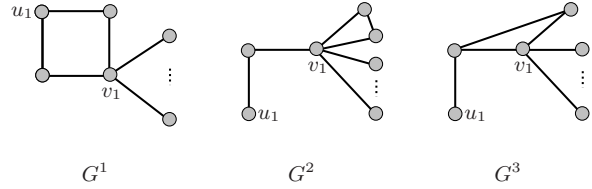}
\caption{Graphs for the proof of Theorem \ref{Nordhaus-Guddum}.}\label{fig11}
\end{center}
\end{figure}

If $\ell\geq 2$, then $\overline{G}\setminus\{u_1,u_2\}$ contains $K_{2,n-4}$ as a spanning subgraph. By Theorem~\ref{bipartite}, there is a coloring of $K_{2,n-4}$ using colors from $[3]$, and we color the edges incident to $u_1$ or $u_2$ in $\overline{G}$ with color $4$. One can easily check this is a $(1,2)$-proper-path $4$-coloring of $\overline{G}$. Thus, $pc_{1,2}(G)+pc_{1,2}(\overline{G})\leq (n-3)+4=n+1<n+2$.

Thus, we may assume $\ell=1$, so $pc_{1,2}(G)\leq pc_{1,2}(H)+1\leq 6$. Since $\overline{G}$ is connected, then $|N_{\overline{G}}(u_1)|\geq 1$ and $\overline{G}$ contains $G^1$, $G^2$ or $G^3$ (see Figure~\ref{fig11}) as a spanning subgraph. We first suppose that $G^1$ is a spanning subgraph of $\overline{G}$. Let $H_{1}, \dots, H_{5}$ be as in Figure~\ref{fig12}. If $\overline{G}\cong H_1$, then it is easy to verify that $pc_{1,2}(G)+pc_{1,2}(\overline{G})=3+3=6<8=n+2$ for $n=6$ and $pc_{1,2}(G)+pc_{1,2}(\overline{G})=4+3=7<9=n+2$ for $n=7$. If $\overline{G}\cong H_1$ and $n\geq 8$, the coloring depicted in Figure \ref{fig12} shows that $pc_{1,2}(\overline{G}) \leq n-4$. In addition, if we color $u_1v_1$ with color $1$, other edges incident to $u_1$ with color $2$ and all other edges color $3$ in $G$, then we get a $(1,2)$-proper-path $3$-coloring of $G$. Consequently, $pc_{1,2}(G)+pc_{1,2}(\overline{G})\leq (n-4)+3=n-1<n+2$. Next we consider the situation $H_1\varsubsetneqq \overline{G}$. Adding an edge to $G^1$, we arrive at some graph in $\{H_2,~H_3,~H_4,~H_5\}$ depicted in Figure~\ref{fig12}. If $\overline{G}\cong H_5$, then $pc_{1,2}(\overline{G})\leq n-4$ by the coloring in Figure~\ref{fig12}. In order to color $G$, we color $u_1v_1$ with color $1$ and other edges incident to $u_1$ with color $2$. Additionally, we color edges incident to $x$ ($y$ is the same) with colors $1,~3$ such that both $1$ and $3$ appear and all other edges with color $2$ in $G$. Thus, we get a $(1,2)$-proper-path $3$-coloring of $G$ and so $pc_{1,2}(G)+pc_{1,2}(\overline{G})\leq 3+(n - 4)=n-1<n+2$. If $\overline{G}$ is not isomorphic to $H_5$, then $\overline{G}$ has $H_2,~H_3$ or $H_4$ as its spanning subgraph. As is depicted in Figure \ref{fig12}, $pc_{1,2}(H_i)\leq n-5~(2\leq i\leq 4)$ for $n\geq 9$. Therefore, $pc_{1,2}(G)+pc_{1,2}(\overline{G})\leq 6+(n - 5)=n+1<n+2$ for $n\geq 9$. For the situation $6\leq n\leq 8$, we can verify the result depending on the circumference of $H=G\setminus u_1$ similarly as above. Hence, if $G^1$ is a spanning subgraph of $G$, then $pc_{1,2}(G)+pc_{1,2}(\overline{G})<n+2$. By the same method, we can draw the same conclusion for $G^2$ or $G^3$ as a spanning subgraph of $G$. Therefore, we complete the proof.
\begin{figure}[H]
\begin{center}
\includegraphics[scale = 0.9]{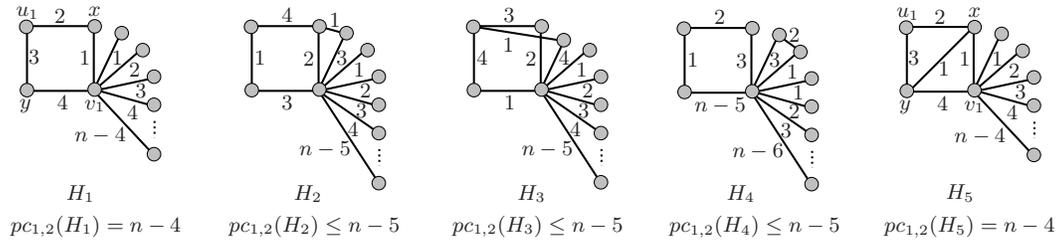}
\caption{Graphs for the proof of Theorem \ref{Nordhaus-Guddum}.}\label{fig12}
\end{center}
\end{figure}
\end{proof}


\begin{thebibliography}{1}

\bibitem{ALLZ}
E. Andrews, E. Laforge, C. Lumduanhom, P. Zhang, On proper-path colorings in graphs, {\it J. Combin. Math. Combin. Comput.} to appear.

\bibitem{BFG}
V. Borozan, S. Fujita, A. Gerek, C. Magnant, Y. Manoussakis, L. Montero,  Zs. Tuza, Proper connection of graphs, {\it Discrete Math.} {\bf 312(17)}(2012), 2550--2560.

\bibitem{BM}
J.A. Bondy, U.S.R. Murty, Graph Therory, GTM 244, Springer-Verlag, New York, 2008.

\bibitem{CJMZ}
G. Chartrand, G.L. Johns, K.A. McKeon, P. Zhang, Rainbow connection in graphs, {\it Math Bohemica.} {\bf 133(1)}(2008), 5--98.

\bibitem{FJ}
J.L. Fouquet, J.L. Jolivet, Strong edge-coloring of graphs and applications to multi-$k$-gons, {\it Ars Combin.} {\bf 16A}(1983), 141--150.

\bibitem{KY}
M. Kriveleich, R. Yuster, The rainbow connection of a graph is (at most) reciprocal to its minimum degree, {\it J. Graph Theory} {\bf 71}(2012), 206--218.

\bibitem{LM}
X. Li, C. Magnant, Properly colored notations of connectivity-a dynamic survey, Theory and Applications of Graphs, {\bf 0(1)}(2015), Article~2.

\bibitem{SUBMITTED}
X. Li, C. Magnant, M. Wei, X. Zhu, Distance proper connection of graphs, arXiv:1606.06547 [math.CO], 2016.

\bibitem{LS2}
X. Li, Y. Shi, Rainbow connection in 3-connected graphs, {\it Graphs} \& {\it Combin.} {\bf 29(5)}(2013), 1471--1475.

\bibitem{LSS}
X. Li, Y. Shi, Y. Sun, Rainbow connections of graphs: A survey, {\it Graphs} \& {\it Combin.} {\bf 29}(2013), 1--38.

\bibitem{LS}
X. Li, Y. Sun, Rainbow Connections of Graphs, Springer Briefs in Math., Springer, New York, 2012.

\end{thebibliography}
\end{document}